\newtheorem{definition}{Definition}[section]
\newtheorem{remark}[definition]{Remark}
\newtheorem{lemma}[definition]{Lemma}
\newtheorem{proposition}[definition]{Proposition}
\newtheorem{theorem}[definition]{Theorem}
\newcommand{\K}{\mathbb{K}}
\newcommand{\N}{\mathbb{N}}
\newcommand{\cO}{\mathcal{O}}
  \providecommand\BibTeX{{%
    \normalfont B\kern-0.5em{\scshape i\kern-0.25em b}\kern-0.8em\TeX}}}
\begin{document}

\title[Bounds for degrees of syzygies]{Bounds for degrees of syzygies of polynomials defining a grade two ideal}

\author{Teresa Cortadellas Ben\'itez}
\address{Universitat de Barcelona, Facultat d'Educaci\'o.
Passeig de la Vall d'Hebron 171,
08035 Barcelona, Spain}
\email{terecortadellas@ub.edu}

\author{Carlos D'Andrea}
\address{Universitat de Barcelona, Departament de Matem\`atiques i Inform\`atica,
 Universitat de Barcelona (UB),
 Gran Via de les Corts Catalanes 585,
 08007 Barcelona,
 Spain} \email{cdandrea@ub.edu}
\urladdr{http://www.ub.edu/arcades/cdandrea.html}

\author{Eul\`alia Montoro}
\address{Universitat de Barcelona, Departament de Matem\`atiques i Inform\`atica,
 Universitat de Barcelona (UB),
 Gran Via de les Corts Catalanes 585,
 08007 Barcelona,
 Spain}
\email{eula.montoro@ub.edu}

\keywords{Effective Quillen-Suslin Theorem, Hilbert-Burch Theorem, syzygyes, $\mu$-bases, degree bounds}
\subjclass[2010]{Primary: 13P20. Secondaries: 13D02,14Q20, 68W30}


\begin{abstract}
We make explicit the exponential bound on the degrees of the polynomials appearing in the Effective Quillen-Suslin Theorem, and apply it jointly with the Hilbert-Burch Theorem to show that the syzygy module of a sequence of $m$ polynomials in $n$ variables defining a complete intersection ideal of grade two is free, and that a basis of it can be computed with bounded degrees. In the known cases, these bounds improve previous results.
\end{abstract}

\maketitle

\section{Introduction}
Let $n$ and $m$ be positive integers, $m\geq2,$ and  $R:=\K[x_1,\ldots, x_n]$ a polynomial ring in $n$ variables with coefficients in an infinite field $\K.$ Suppose that $a_1,\ldots, a_m,\,p,q\in R$ are such that the following equality of ideals in $R$ holds:
\begin{equation}\label{1}
I:=\langle a_1, a_2, \ldots, a_m\rangle = \langle p, q\rangle.
\end{equation}
In this paper, we will show  that $\mbox{Syz}(a_1,\ldots,a_m)$, the $R$-syzygy module of the sequence $(a_1,\ldots, a_m)$ is a free $R$-module, and we will develop algorithms for a computation of a basis of it in terms of the matrices of converting the $a_i$'s into $p,q$ and vice versa. Our approach will be the use of the Effective Quillen-Suslin Theorem presented in \cite{CCDHKS93}. We will also develop a bound on the degrees of the elements of such a basis as a function of the degrees of the input data.

Note that we are not assuming in principle that neither $p$ and $q$ are coprime, nor that the ideal they define equals the whole ring $R$. But our claim can be simplified in the first case by removing common factors without changing the syzygy module, and in the second case the problem is solved completely by applying the Effective Quillen-Suslin Theorem (cf. \cite{CCDHKS93}) directly to the input data, with better bounds than those presented below.  So we can assume w.l.o.g. for the rest of the text that $\gcd(p,q)=1,$ and that they do not generate the unitary ideal, i.e. that the grade of $I$ is equal to two. Let $\delta_a\in\N$ be a bound on the total degrees of $a_1,\ldots, a_m$, and $\delta_0$ a bound for the degrees of $p$ and $q$. The main result of this paper is the following.
\begin{theorem}\label{mtt}
Given the data $a_1,\ldots, a_m, p, q\in\K[x_1,\ldots, x_n]$ satisfying \eqref{1}, and $\delta_a,\,\delta_0$ as defined above. There is an algorithm which computes an $R$-basis of $\mbox{Syz}(a_1,\ldots, a_m)$ made by vectors of polynomials of degree bounded by $3n^24^n(\delta_0^2+\delta_a+1)^{2n}.$

If in addition $I$ is zero dimensional (for instance when $n=2$), then another basis of the same module can be computed with the following degree bound:
\begin{equation}\label{sb}
2\delta_0+2\delta_a+2\delta_a^2+\delta_0^2+3mn^24^n(2\delta_a^2+\delta_a+\delta_0+1)^{2n}.
\end{equation}
\end{theorem}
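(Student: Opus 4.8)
The plan is to realize $\mbox{Syz}(a_1,\dots,a_m)$ explicitly from the matrices that convert one system of generators into the other, to prune the resulting short generating set down to an honest basis by an explicit version of the Effective Quillen--Suslin Theorem, and to invoke Hilbert--Burch to certify that the outcome is a basis of a free module of rank $m-1$.

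\emph{The conversion matrices.} Since $p,q\in I$ and every $a_i\in\langle p,q\rangle$, there are matrices $U\in R^{m\times2}$ and $V\in R^{2\times m}$ with $(p\ q)=(a_1\cdots a_m)\,U$ and $(a_1\cdots a_m)=(p\ q)\,V$. The entries of $V$ are cheap to bound: $p,q$ is a regular sequence, so membership of a degree-$\le\delta_a$ element in the grade-two complete intersection $\langle p,q\rangle$ has cofactor degrees controlled by $\delta_a$ and $\delta_0$. For $U$ one cannot pay the doubly exponential price of general effective ideal membership; instead, using that $\gcd(p,q)=1$ forces $\mbox{Syz}(p,q)=R\,s_0$ with $s_0=(q,-p)^{t}$, one observes that the $2\times(m+1)$ matrix $[\,V\mid -s_0\,]$ is \emph{unimodular}, since its cokernel equals $R^{2}/(\mathrm{im}\,V+R\,s_0)\cong I/\langle a_1,\dots,a_m\rangle=0$. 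Computing a right inverse $Z$ of $[\,V\mid -s_0\,]$ with the Effective Quillen--Suslin machinery, its first $m$ rows give $U$ (the last row gives a row $\rho$ with $VU-s_0\rho=\mathrm{Id}_2$, whence $(a_1\cdots a_m)U=(p\ q)VU=(p\ q)$ because $(p\ q)\,s_0=0$); in particular $\deg U$ is bounded by a single-exponential function of $\delta_0$ and $\delta_a$, independent of $m$.

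\emph{Explicit generators, and a basis.} From $(a_1\cdots a_m)(\mathrm{Id}_m-UV)=(a_1\cdots a_m)-(p\ q)V=0$ the columns of $\mathrm{Id}_m-UV$ are syzygies, and if $v$ is any syzygy then $Vv\in\mbox{Syz}(p,q)=R\,s_0$, say $Vv=\lambda s_0$, so $v=(\mathrm{Id}_m-UV)\,v+\lambda\,U s_0$. Hence $\mbox{Syz}(a_1,\dots,a_m)$ is generated by the columns of the $m\times(m+1)$ matrix $W:=[\,\mathrm{Id}_m-UV\mid U s_0\,]$, whose entries have degree $\le\deg U+\max(\deg V,\delta_0)$. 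On the other hand, $p,q$ being a regular sequence gives $R/I$ the length-two Koszul resolution, so $\mathrm{pd}_R I=1$ and $\mathrm{pd}_R\mbox{Syz}(a_1,\dots,a_m)=0$: the syzygy module is projective, hence free by Quillen--Suslin, of rank $m-1$; moreover, since $\gcd(p,q)=1$ the nonzerodivisor occurring in Hilbert--Burch is forced to be a unit, so the signed maximal minors of any $m\times(m-1)$ basis matrix $N$ equal $a_1,\dots,a_m$ up to a nonzero scalar. I would then feed $W$ — whose image $\mbox{Syz}(a_1,\dots,a_m)$ is free of rank $m-1$ (equivalently, $\ker W$ is a direct summand of $R^{m+1}$, free of rank two, which one completes to a basis of $R^{m+1}$) — into the Effective Quillen--Suslin Theorem to produce such an $N$ with degrees bounded in terms of $\deg W$, the Hilbert--Burch minor identity certifying that the extracted vectors are independent and generate. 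Feeding the degree $D\approx\delta_0^{2}+\delta_a$ of the data into the explicit Quillen--Suslin bound of the form $3n^{2}4^{n}(D+1)^{2n}$ in $n$ variables yields the first bound of Theorem~\ref{mtt}.

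\emph{The main obstacle and the zero-dimensional case.} The module-theoretic skeleton above is largely formal bookkeeping; the genuine work is to extract from the proof of the Effective Quillen--Suslin Theorem of \cite{CCDHKS93} a clean quantitative statement, tracking the induction on the number of variables and the successive localizations/patchings so that unimodular data of degree $\le D$ in $n$ variables is trivialized in degree $\le 3n^{2}4^{n}(D+1)^{2n}$, and then verifying that the normalization $\gcd(p,q)=1$ made at the outset has cost nothing in unimodularity along the way. When $I$ is zero-dimensional a second, sharper route becomes available: the relevant ideal memberships now obey B\'ezout-type estimates (of order $\delta_a^{2}$ in place of $\delta_0^{2}+\delta_a$), and one can produce an explicit Hilbert--Burch matrix for $I$ more directly; threading these through the same Quillen--Suslin bound and adding the $2\delta_0+2\delta_a+2\delta_a^{2}+\delta_0^{2}$ contributed by the normalization and the conversion matrices yields the alternative bound \eqref{sb}. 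The delicate point there is to keep the improved membership bounds from degrading as they pass through the exponential Quillen--Suslin step.
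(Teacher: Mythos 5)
Your module-theoretic skeleton is fine (the matrix $[\,V\mid -s_0\,]$ is exactly the paper's $\tilde M$ from \eqref{tildem}, and your cokernel argument for its unimodularity is a legitimate alternative to Lemmas \ref{hunno}, \ref{propi} and Proposition \ref{Nu}), but the effective basis-extraction step — the actual content of the theorem — has a genuine gap that also destroys the claimed bound. You use the Quillen--Suslin output on $[\,V\mid -s_0\,]$ only to obtain the right inverse $Z$, i.e.\ the conversion matrix $U$ (the paper's $N$), and then propose a second effective step on $W=[\,\mathrm{Id}_m-UV\mid Us_0\,]$. First, $W$ is not unimodular: its image is $\mbox{Syz}(a_1,\ldots,a_m)$, of rank $m-1$, so all $m\times m$ minors of $W$ vanish and Theorems \ref{eqs}/\ref{explicitqs} do not apply to it; the variant you mention, completing a basis of $\ker W$ to a basis of $R^{m+1}$, needs degree-bounded generators of $\ker W$, which you never produce. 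Second, even granting such a step, its input degree is $\deg W\geq\deg U$, which after the first Quillen--Suslin application is already of order $3n^24^n(\delta_0^2+\delta_a+1)^{2n}$, so a second application compounds the bounds and cannot give $3n^24^n(\delta_0^2+\delta_a+1)^{2n}$; your final sentence feeds $D\approx\delta_0^2+\delta_a$ into the bound, but that is the degree of $V$, not of $W$. The missing idea is Theorem \ref{mt}: a \emph{single} application of Theorem \ref{explicitqs} (with $r=2$ and $d=\max\{\delta_M,\delta_0\}\leq\delta_0^2+\delta_a$, by the complete-intersection membership bound of Proposition \ref{43}) to $\tilde M$ gives $U$ as in \eqref{QS}, and the columns $3,\ldots,m+1$ of that very matrix, with the last row deleted, already form a basis of $\mbox{Syz}(a_1,\ldots,a_m)$: Cramer's rule shows their signed maximal minors are $a_1,\ldots,a_m,0$, the converse of Hilbert--Burch (Theorem \ref{hb}) plus the column operations in the proof of Theorem \ref{mt} certify the basis, and Proposition \ref{41} then yields the stated bound with no further effective step.

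The zero-dimensional bound \eqref{sb} is not proved in your sketch; it is reverse-engineered from its shape. In the paper, zero-dimensionality is used for one precise purpose: by Proposition \ref{44} (Hashemi) one may choose $N$ with $\delta_N\leq 2\delta_a^2+\delta_a+\delta_0$, and the algorithm of \S\ref{22} then applies Theorem \ref{explicitqs} to the always-unimodular $N$ itself — which is why the exponential has argument $\delta_N+1=2\delta_a^2+\delta_a+\delta_0+1$ — inverts the resulting matrix by cofactors (this is where the factor $m$ enters), corrects columns $3,\ldots,m$ using memberships in $\langle p,q\rangle$ of degree at most $\delta_N+\delta_M$ with $\delta_M\leq\delta_0^2+\delta_a$, and finally modifies the first column by $p$ and $q$ (adding $\delta_0$); summing these as in Proposition \ref{42}, with Theorem \ref{NN} and Hilbert--Burch certifying the basis, gives exactly \eqref{sb}. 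Your paragraph specifies neither which unimodular matrix the Quillen--Suslin step is applied to, nor why its input degree is $2\delta_a^2+\delta_a+\delta_0$, nor the origin of the factor $m$, and the additive term $2\delta_0+2\delta_a+2\delta_a^2+\delta_0^2$ is asserted rather than derived, so as written the second half of the theorem remains unestablished.
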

The proof of this Theorem is given in Section \ref{4.2}. These results can be applied to the case treated in our ISSAC 2020 paper  \cite{CDM20}, where $n=2,\,m=4$ and $p, q$ a ``Shape Lemma'' representation of the radical ideal $I$.
Our algorithm and main result there state that a basis of $\mbox{Syz}(a_1, a_2, a_3, a_4)$ can be found with degree bounded by $5\delta_0^4(2\delta_a+1)^4.$

Note that in this case, $\delta_0$ can be expressed in terms of $\delta_a$ thanks to B\'ezout Theorem, and we can set $\delta_0:=\delta_a^2.$
The results in \cite{CDM20} amount then to a bound of size $\delta_a^{12}.$ In contrast, the first bound in Theorem \ref{mtt} amounts to a constant times $\delta_a^{16}$, while the second one is of the order of $\delta_a^8,$ which is an improvement with respect to this previous bound. In addition, we will see in Proposition \ref{citam} that a careful analysis of this situation can actually make the bound of size $\delta_a^{12}$ get smaller, comparable to the results in \cite{CDM20}.

It should be pointed out, however, that even though the situation presented here contains the problem tackled in \cite{CDM20}, this paper is not a generalization of the results given there, as our methods  are slightly different  despite the fact that in both cases we use Hilbert-Burch and Effective Quillen-Suslin theorems.  Our ISSAC 2020 paper dealt with the case when $p$ and $q$ are ``shape'' basis of an ideal of $4$ polynomials in $2$ variables, and the results were restricted to that case. Here, we deal with any number of polynomials and variables, and even in the case $n=2$ we are not assuming that the ideal has a shape basis, just that it is a complete intersection of grade $2$.  In Section \ref{final} we will compare both approaches.

 The computation of syzygies of sequences of polynomials is of  major interest in the Computer Aided Geometric Design community. In the cases of curves ($n=1$) and surfaces ($n=2$), these are called ``$\mu$-bases'',  see for instance \cite{cox03, CW03, CCL05,DCS05, HHK17,SL17, YFJS19, YJ19} and the references therein.  The situation for curves is quite well-understood and classical, see for instance \cite{CSC98}. The existence of $\mu$-bases for surfaces has been proven in \cite{CCL05}, and several methods have been proposed for computing them in taylored situations (revolution, canal, translational,... surfaces). In \cite{DCS05}, a concrete method for computing a $\mu$-basis is proposed based on the Matrix Primitive Factorization Theorem given in \cite{GB82}, but no concrete bounds on the outcome are given.

 Even though the study of syzygies of ideals of grade $2$ does not cover all the cases of interest in the literature ---for instance, it is known that if $n=2$ the syzygy module of $a_1,\ldots, a_m$ is always free independently of the fact that $I$ can be generated by $2$ polynomials, see \cite{cid19} for general bounds for degrees in this case--- the situation presented in \eqref{1} is quite general in the sense that to have $\mbox{Syz}(a_1,\ldots, a_m)$ being a free module,  if all the $a_i$'s are coprime, then Hilbert-Burch Theorem \ref{hb} implies that this ideal must have grade $2,$ and hence they should be described ---at least locally--- with two polynomials.

The paper is organized as follows: in Section \ref{2} we review both Hilbert-Burch and Effective Quillen-Suslin Theorems (Theorems \ref{hb} and \ref{eqs} respectively), and give an explicit bound in Theorem \ref{explicitqs} for the degree of a unimodular ``inverse'' matrix to a unimodular one. In Section \ref{3} we show that the matrix of polynomials converting $(a_1 \ldots a_m)$ into $(p\ q)$ is unimodular, while the one reversing this process is ``almost'' unimodular, see Proposition \ref{Nu}, but can be replaced by a unimodular one (cf. Proposition \ref{Mun}). This result allows a complete characterization of those data $a_1,\ldots, a_m, p, q\in R$ satisfying \eqref{1} in terms of a unimodular conversion matrix, see Theorem \ref{char}.

All these results are then used in Section \ref{alg} to develop  algorithms which compute an $R$-basis of $\mbox{Syz}(a_1,\ldots, a_m)$ based each of them in one of these conversion matrices.

In Section \ref{4} we study degree bounds for the output of the algorithms presented, and also prove Theorem \ref{mtt}. In section \ref{5} we show some examples illustrating our methods and tools. We conclude the paper by comparing our approach with the one presented in \cite{CDM20} in Section \ref{final}.

\smallskip
\noindent\underline{\bf Acknowledgements:} All our computations were done with the aid of the softwares Maple \cite{map20} and Mathematica \cite{math18}. We also acknowledge useful conversations with Mart\'in Sombra while working some of the results of this paper. T. Cortadellas was supported by the Spanish MICINN Research projects MTM2013-40775-P and PID 2019-104844GB-100.  C. D'Andrea and E. Montoro were supported by the Spanish MICINN research projects MTM 2015-65361-P and PID2019-104047GB-I00.

\bigskip
\section{Hilbert-Burch and Effective Quillen-Suslin Theorems}\label{2}
We start by recalling the well-known Hilbert-Burch Theorem for resolutions of length $1.$
\begin{theorem}\label{hb} \cite[Theorem 3.2]{eis05}
Suppose that an ideal $I$ in a Noetherian commutative ring $A$ admits a free resolution of length $1$ as follows:
$$0\to F_1\stackrel{G}{\to}F_2\to I\to0.$$
If the rank of the free module $F_1$ is $\ell,$ then the rank of $F_2$ is $\ell+1,$ and there exists a nonzero divisor $a\in A$ such that $I$ is equal to $a$ times the ideal of $\ell\times \ell$ minors of the matrix $G$ with respect with any given bases of $F_1$ and $F_2$. The generator of $I$ that is the image of the $i$-th basis vector of $F_2$ is $\pm a$ times the determinant of the submatrix of $G$ formed from all except the $i$-th row. Moreover, the grade of the ideal of maximal minors is $2$.

Conversely, given an $(\ell+1)\times \ell$ matrix $G$ with entries in $A$ such that the grade of the ideal of $\ell\times \ell$ minors of $G$ is at least $2,$ and a given nonzero divisor $a\in A$, the ideal $I$ generated by $a$ times the $\ell\times \ell$ minors of $G$ admits a free resolution of length one as above. It has grade $2$ if and only if $a$ is a unit.
\end{theorem}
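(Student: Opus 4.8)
This is the classical Hilbert--Burch theorem, so the plan is to reconstruct its standard proof. I would begin with the rank count: a standard argument on $0\to F_1\to F_2\to I\to0$ gives $\operatorname{rank}(F_2)=\operatorname{rank}(F_1)+\operatorname{rank}(I)=\ell+1$, using that a nonzero ideal with a finite free resolution has rank one. Fix bases and write $G$ also for the resulting $(\ell+1)\times\ell$ matrix of the map $F_1\to F_2$; let $\Delta_i$ ($1\le i\le\ell+1$) be the determinant of the $\ell\times\ell$ submatrix of $G$ obtained by deleting its $i$-th row, and put $I_\ell(G):=\langle\Delta_1,\dots,\Delta_{\ell+1}\rangle$. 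Expanding along the last column the determinant of the $(\ell+1)\times(\ell+1)$ matrix formed by appending any column of $G$ to $G$, a determinant that vanishes because two columns coincide, yields the Laplace identity $\sum_i(-1)^i\Delta_i\,G_{ij}=0$ for all $j$. Hence the row $\alpha:=\big((-1)^1\Delta_1,\dots,(-1)^{\ell+1}\Delta_{\ell+1}\big)$, read as a map $F_2\to A$, annihilates $\operatorname{im}(G)$, so it factors through $F_2/\operatorname{im}(G)\cong I$ as a homomorphism $\bar\alpha\colon I\to A$ with image $I_\ell(G)$, sending the image $b_i$ of the $i$-th basis vector of $F_2$ to $(-1)^i\Delta_i$.

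The heart of the argument is that $\bar\alpha$ is injective. Since $G$ is injective, McCoy's theorem gives $\operatorname{Ann}_A(I_\ell(G))=0$, so over the Noetherian ring $A$ the ideal $I_\ell(G)$ contains a nonzerodivisor; in particular $\operatorname{grade}(I_\ell(G))\ge1$ and $I_\ell(G)$ lies in no associated prime of $A$. Because a nonzero submodule of $A$ has an associated prime, necessarily in $\operatorname{Ass}(A)$, it is enough to check that $\bar\alpha$ becomes injective after localizing at each $\mathfrak p\in\operatorname{Ass}(A)$; but at such a $\mathfrak p$ some $\Delta_i$ is a unit, so the $\ell$ rows of $G$ other than the $i$-th form an invertible block, $G_{\mathfrak p}$ is a split injection with $I_{\mathfrak p}=\operatorname{coker}(G_{\mathfrak p})$ free of rank one, and $\bar\alpha_{\mathfrak p}$, being a surjection from $I_{\mathfrak p}\cong A_{\mathfrak p}$ onto $(I_\ell(G))_{\mathfrak p}=A_{\mathfrak p}$, is an isomorphism. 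Consequently $\bar\alpha$ identifies $I$ with the ideal $I_\ell(G)$; in particular $G$ resolves $I_\ell(G)$ too, so $0\to F_1\xrightarrow{G}F_2\xrightarrow{\alpha}A\to A/I_\ell(G)\to0$ is exact. Feeding this into the Buchsbaum--Eisenbud acyclicity criterion forces $\operatorname{grade}(I_\ell(G))\ge2$, while its mere existence gives $\operatorname{pd}_A\big(A/I_\ell(G)\big)\le2$, hence $\operatorname{grade}(I_\ell(G))\le2$; so the grade equals $2$. Finally, $\operatorname{grade}(I_\ell(G))\ge2$ makes the natural map $A\to\operatorname{Hom}_A(I_\ell(G),A)$ an isomorphism (from the $\operatorname{Ext}$ long exact sequence of $0\to I_\ell(G)\to A\to A/I_\ell(G)\to0$), so the composite $I_\ell(G)\xrightarrow{\bar\alpha^{-1}}I\hookrightarrow A$ is multiplication by some $a\in A$; this $a$ is a nonzerodivisor (it acts injectively and $I_\ell(G)$ contains a nonzerodivisor), $a\,I_\ell(G)=I$, and $b_i=\bar\alpha^{-1}\big((-1)^i\Delta_i\big)=(-1)^i a\,\Delta_i$.

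For the converse, given an $(\ell+1)\times\ell$ matrix $G$ with $\operatorname{grade}(I_\ell(G))\ge2$ and a nonzerodivisor $a$, I would set $I:=a\,I_\ell(G)$ and define $\psi\colon A^{\ell+1}\to A$ by $\psi(e_i)=(-1)^i a\,\Delta_i$. Then $\psi$ is onto $I$, the $a$-multiple of the Laplace identity gives $\psi\circ G=0$, McCoy gives $G$ injective, and $\ker\psi=\ker\alpha$ since $a$ is a nonzerodivisor; applying the Buchsbaum--Eisenbud criterion to the complex $0\to A^{\ell}\xrightarrow{G}A^{\ell+1}\xrightarrow{\alpha}A$, whose hypotheses are the obvious rank identities together with $\operatorname{grade}(I_\ell(G))\ge2$, shows it is exact, so $\ker\psi=\operatorname{im}(G)$ and $0\to A^{\ell}\xrightarrow{G}A^{\ell+1}\xrightarrow{\psi}I\to0$ is the claimed resolution. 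As for the grade of $I$: if $a$ is a unit then $I=I_\ell(G)$ has grade $2$; if $a$ is a non-unit nonzerodivisor then $I\subseteq(a)$ with $(a)$ a proper principal ideal, so $\operatorname{grade}(I)\le1$, while $\operatorname{grade}(I)\ge1$ because $I$ contains $a$ times a nonzerodivisor of $I_\ell(G)$, whence $\operatorname{grade}(I)=1\ne2$ and the equivalence follows. The genuinely delicate point, and the one I would write out in full, is the injectivity of $\bar\alpha$: it is what turns the formal relation ``$\alpha$ kills $\operatorname{im}(G)$'' into the module isomorphism $I\cong I_\ell(G)$, and it is where one must localize at the associated primes of $A$ and handle the possibility that $A$ is not a domain. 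Everything else is a determinant identity, an appeal to McCoy's theorem or to the Buchsbaum--Eisenbud criterion, or a short $\operatorname{Ext}$ computation.
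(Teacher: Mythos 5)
The paper does not prove this statement at all: it is quoted as a known result from \cite[Theorem 3.2]{eis05}, so there is no internal argument to compare yours against. Your proposal is a correct reconstruction of the standard Hilbert--Burch proof (Laplace relations defining the map to $A$, McCoy's theorem and localization at the associated primes of $A$ for injectivity, the Buchsbaum--Eisenbud acyclicity criterion together with the bound of grade by projective dimension to pin the grade of the minor ideal at $2$, and the Ext/grade-two argument producing the nonzerodivisor $a$), which is essentially the proof in Eisenbud's book that the paper is citing.
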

We bring also to the picture the main tool we will use in our paper, namely the Effective Quillen-Suslin Theorem.
We recall that  $R=\K[x_1,\ldots, x_n]$ is a polynomial ring in $n$ variables with coefficients in an infinite field $\K.$
A matrix $U\in R^{r\times s}$ is called \emph{unimodular} if the ideal generated by the maximal minors of it equals to the whole  ring $R.$ We denote by ${\bf I}_r$ the identity matrix of size $r\times r$. An elementary matrix is one that consists in exchanging two rows (or two columns) of ${\bf I}_r$, or adding to a row (or column) a polynomial multiple of another. The degree of a matrix equals the maximum of the degrees of its entries.
\begin{theorem}\cite[Theorem 3.1]{CCDHKS93}\label{eqs}
Assume that $F\in R^{r\times s}$ is unimodular, with $r\leq s.$ Then, there exists a square matrix $U\in R^{s\times s}$ such that
\begin{enumerate}
\item $U$ is unimodular,
\item $F\cdot U=[{\bf I}_r,{\bf 0}]\in R^{r\times s},$
\item $\deg(U)=(rd)^{\cO(n)},$ and
\item $U$ is a product of $\cO(n^2s^2(rd)^{2n})$ matrices, each of them being elementary or having the form $T\oplus {\bf I}_{s-r-1}$ for some $T\in SL_{r+1}(R).$
\end{enumerate}
\end{theorem}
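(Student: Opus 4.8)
The plan is to recover the effective statement by following Suslin's proof of Serre's conjecture and tracking every degree quantitatively, as is done in \cite{CCDHKS93}. The global structure is a double induction: an outer induction on the number $n$ of variables and, for fixed $n$, an inner induction on the number $r$ of rows, so that everything is reduced to the fundamental case $r=1$ of a single unimodular row $v\in R^{1\times s}$. Writing $d=\deg F$, the row case is in turn handled by the classical mechanism: produce, after a change of coordinates, an entry of $v$ that is monic in the last variable $x_n$; trivialise $v$ modulo $x_n$ by the outer induction over $\K[x_1,\dots,x_{n-1}]$; and glue the result back by an \emph{effective} version of Quillen patching. The two quantitative items, the degree bound $\deg(U)=(rd)^{\cO(n)}$ and the factorisation count $\cO(n^2s^2(rd)^{2n})$, then come out of bookkeeping along the recursion.

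I would first set up the reduction from matrices to rows. Since $F$ is unimodular there is $G\in R^{s\times r}$ with $F\cdot G={\bf I}_r$; in particular the first row $v$ of $F$ satisfies $v\cdot g_1=1$, where $g_1$ is the first column of $G$, so $v$ is already a unimodular row. Granting the row case, choose a unimodular $U_1$ with $v\cdot U_1=(1,0,\dots,0)$. A one-line minor computation shows that the ideal of maximal minors of $F\cdot U_1$ equals the ideal of $(r-1)\times(r-1)$ minors of the block $F'$ obtained by deleting the first row and the first column of $F\cdot U_1$, so $F'$ is unimodular; applying the inner induction to $F'$, conjugating the resulting matrix by ${\bf I}_1\oplus(\,\cdot\,)$, and clearing the remaining first-column entries with elementary matrices finishes the reduction — and each factor produced is elementary or of the form $T\oplus{\bf I}_{s-r-1}$ with $T\in SL_{r+1}(R)$, as required. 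The base case $n=1$ is classical: $\K[x_1]$ is Euclidean, the entries of a unimodular row have unit $\gcd$, and the Euclidean algorithm writes $U_1$ as a product of $\cO(sd)$ elementary matrices of degree at most $d$.

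For the inductive step in $n$, write $A=\K[x_1,\dots,x_{n-1}]$, $R=A[x_n]$, and let $v=(f_1,\dots,f_s)$ be unimodular. The first move is a Noether-type substitution $x_i\mapsto x_i+x_n^{\lambda_i}$ with the $\lambda_i$ of size $\cO(d)$, chosen so that, after a permutation, $f_1$ becomes monic in $x_n$ of degree $e=d^{\cO(n)}$; this is where exponential dependence on $n$ first enters. Since $v|_{x_n=0}$ is unimodular over $A$, the outer induction gives $\bar U_0\in GL_s(A)$ with $v|_{x_n=0}\cdot\bar U_0=(1,0,\dots,0)$, and replacing $v$ by $v\cdot\bar U_0$ we may assume $v\equiv(1,0,\dots,0)\pmod{x_n}$ while still keeping a coordinate monic in $x_n$. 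The heart of the argument is then an effective Horrocks/Quillen-patching step: one exhibits finitely many $c_1,\dots,c_k\in A$ generating the unit ideal such that over each localisation $A_{c_j}$ the row is conjugate, via $GL_s(A_{c_j}[x_n])$, to one extended from $A_{c_j}$ — Horrocks' theorem applies thanks to the monic coordinate — with all transition data of controlled degree, and then patches these local trivialisations into a global $U\in R^{s\times s}$ by a Vaserstein-type partition-of-unity formula. Unwinding the $n-1$ descents, each multiplies degrees by a factor $(rd)^{\cO(1)}$ and contributes $\cO(n\,s^2(rd)^{2})$ admissible factors, which multiplies up to $\deg(U)=(rd)^{\cO(n)}$ and to a total of $\cO(n^2s^2(rd)^{2n})$ factors.

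The hard part will be exactly the effective Quillen-patching step, and it splits into two estimates that must be controlled simultaneously. First one must bound the number $k$ and the degrees of the comaximal elements $c_j\in A$ over which the row becomes extended; this rests on an effective form of Horrocks' theorem together with an effective description of the ``non-extension locus'' in $\operatorname{Spec}A$ as a closed set cut out by polynomials of bounded degree, from which comaximal $c_j$ of bounded degree can be extracted. Second, one must bound the degree blow-up hidden in the patching formula that glues the local unimodular matrices over $A_{c_i}[x_n]$ and $A_{c_j}[x_n]$ into a single matrix over $R$; it is this gluing, iterated through the recursion, that produces the exponent $2n$ in the factor count. Everything else — the Euclidean base case, the monic normalisation, and the matrix-to-row reduction — contributes only polynomially and can be assembled mechanically once these two estimates are in hand.
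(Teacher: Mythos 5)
This statement is quoted verbatim from \cite{CCDHKS93}; the paper does not reprove it, but in Theorem \ref{explicitqs} it retraces the constructive proof of \cite{CCDHKS93} in order to make the exponent in item (3) explicit. That construction is Suslin-style and works with the whole $r\times s$ matrix at once: after a linear change of coordinates and multiplication by an explicit unimodular matrix making the leading $r\times r$ minor monic and degree-dominant, one forms two $r\times r$ minors $D_1,D_2$ of $F\cdot Y$ for a generic matrix $Y$, takes their resultant $c$ with respect to $x_n$, specializes $Y$ at finitely many points so that the $c(x,{\bf y}^i)$ generate the unit ideal (Proposition \ref{nellefective}), writes a B\'ezout identity for $x_n$ with degree control from the effective Nullstellensatz (Proposition \ref{25}), and assembles from this matrices $E_k$ whose product eliminates $x_n$ from $F$; recursion over the variables finishes. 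Your route --- reduce to the row case $r=1$ and then run effective Horrocks plus Quillen/Vaserstein patching over comaximal localizations --- is a genuinely different, Quillen-style strategy (closer to Fitchas--Galligo \cite{FG90} and Logar--Sturmfels than to \cite{CCDHKS93}). It can in principle yield an effective theorem, but as written it is only a plan: the two estimates you yourself isolate as ``the hard part'' (bounding the number and degrees of the comaximal $c_j$ via an effective Horrocks theorem, and bounding the degree blow-up in the patching formula) are precisely the quantitative content of items (3) and (4), and they are not carried out.

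There is also a structural problem with the announced double induction. For fixed $n$ you trivialize the first row of $F$ by the full row case over $n$ variables, at a degree cost of order $(\deg F)^{\cO(n)}$, and then recurse on the $(r-1)\times(s-1)$ block of $F\cdot U_1$, whose degree is already of order $d^{\cO(n)}$; iterating over the $r$ rows compounds the degrees as $d^{\cO(n)}, d^{\cO(n)^2},\dots$, giving an exponent that grows geometrically in $r$ rather than the claimed $(rd)^{\cO(n)}$, whose exponent is independent of $r$. Avoiding exactly this compounding is why \cite{CCDHKS93} eliminate one variable at a time for the full $r\times s$ matrix (via the two minors of $F\cdot Y$), so that $r$ enters only inside the base of the exponential. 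Relatedly, the factor shape $T\oplus{\bf I}_{s-r-1}$ with $T\in SL_{r+1}(R)$ and the count $\cO(n^2s^2(rd)^{2n})$ in item (4) are artifacts of that construction (the matrices $E_k$ act on $r+1$ columns at a time); your reduction-to-rows would produce factors of a different shape, and the asserted ``$\cO(n\,s^2(rd)^2)$ factors per descent'' is stated without justification. To make the proposal into a proof you would either have to switch to the matrix-level Suslin argument of \cite{CCDHKS93}, or supply an effective Horrocks/patching argument whose degree bookkeeping is genuinely uniform in $r$ and which recovers the specific factorization of item (4).
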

The proof given in \cite{CCDHKS93} of this result is constructive. In the rest of this section, we will review some steps of it to make explicit the exponent  $\cO(n)$ which appears in (3).

Assume then that a unimodular matrix $F\in R^{r\times s}$ is given. In a preliminary step in \cite{CCDHKS93}, one has to make a linear change of coordinates, and then multiply $F$ by the unimodular matrix $A=(a_{ij})_{1\leq i,j\leq s}$ defined by
$$a_{ij}=\left\{\begin{array}{ccl}
x_n&\mbox{if}& i=j\leq r\\
1&\mbox{if}&j=i+1\\
1&\mbox{if}& i=s, j=1\\
0&&\mbox{everywhere else}
\end{array}
\right.,
$$
in such a way that the conditions of Assumption 2.8 in \cite{CCDHKS93} are satisfied, namely that the $r\times r$ minor of $F$ made by choosing the first $r$ columns is monic in all the variables $x_1,\ldots, x_n,$ and having total degree strictly larger than the degree of the remaining maximal minors of $F$. This may increase the value of $d$ in $1,$ so we will have to keep track of this in order to get an explicit bound.

Next we will have to deal with a version of Hilbert's Nullstellensatz presented in that paper. To do this, consider the $s\times s$ matrix $Y=(y_{ij})_{1\leq i,j \leq s}$ where each
of the $y_{ij}$ is a new indeterminate. Set $F_Y:= F\cdot Y\in (R\otimes\K[y_{ij}])^{r\times s}.$ Denote with $D_1$ (resp. $D_2$) the determinant of the $r\times r$ submatrix of $F_Y$ made by choosing its first $r$ columns (resp. the columns $1,\ldots, r-1, r+1$), and denote with $c\in\K[x_1,\ldots, x_{n-1}, y_{ij},\,1\leq i, j\leq s],$ the resultant as defined in \cite[Theorem 9.3]{wal78} of $D_1$ and $D_2$ with respect to $x_n$. From  \cite[Theorem 10.9]{wal78} we easily verify that
\begin{equation}\label{boundc}
\deg_{x_1,\ldots, x_{n-1}}(c)\leq \big(r(d+1)\big)^2,
\end{equation}
and moreover by applying the latter result and \cite[Theorem 9.6]{wal78} we deduce that there exist $A_1, A_2\in \K[x_1,\ldots, x_{n-1}, y_{ij},\,1\leq i,j\leq s]$ such that
\begin{equation}\label{bezc}
c=A_1D_1+A_2D_2, \ \mbox{with} \ \deg_{x_1,\ldots, x_{n-1}}(A_1, A_2)\leq \big(r(d+1)\big)^2.
\end{equation}
\begin{lemma}\cite[Lemma 4.4]{CCDHKS93}\label{keyn}
For all $\xi\in\K^{n-1},$ there exists ${\bf y}_\xi\in\K^{s\times s}$ such that $c(\xi, {\bf y}_\xi)\neq0$
\end{lemma}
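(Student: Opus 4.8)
The plan is to fix $\xi\in\K^{n-1}$, substitute $x_i=\xi_i$ for $1\le i\le n-1$ throughout, and then exhibit a single constant matrix $Y_0\in\K^{s\times s}$ with $c(\xi,Y_0)\ne 0$. Since $c$ has coefficients in $\K$ and $\K$ is infinite, producing one such $Y_0$ is the same as showing $c(\xi,\,\cdot\,)$ is not the zero polynomial, and then a generic $\K$-point works. As $c=\mathrm{Res}_{x_n}(D_1,D_2)$, it suffices to choose $Y_0$ so that (a) the leading $x_n$-coefficients of $D_1$ and $D_2$ do not vanish at $(\xi,Y_0)$ (so the specialized resultant does not degenerate through a vanishing leading coefficient), and (b) $D_1(\xi,x_n,Y_0)$ and $D_2(\xi,x_n,Y_0)$ are coprime in $\overline{\K}[x_n]$; under (a) and (b), $c(\xi,Y_0)$ is the resultant of two coprime univariate polynomials over $\overline{\K}$, hence is nonzero.

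The lever is the unimodularity of $F$: the maximal minors $[F]_K$ generate $R$, so they have no common zero, which means $F(\xi,\alpha)\in\overline{\K}^{r\times s}$ has full row rank $r$ for every $\alpha\in\overline{\K}$. Freeze the first $r-1$ columns of $Y$ at a matrix $\tilde Y\in\K^{s\times(r-1)}$ to be chosen, let $\mathbf m(x_n)\in\overline{\K}[x_n]^r$ be the signed vector of $(r-1)\times(r-1)$ minors of $F(\xi,x_n)\tilde Y$, and expand $D_1,D_2$ along their last columns; this gives
\[
D_1=\mathbf w\cdot Y_r,\qquad D_2=\mathbf w\cdot Y_{r+1},\qquad \mathbf w:=F(\xi,x_n)^{\mathsf T}\mathbf m(x_n)\in\overline{\K}[x_n]^s,
\]
where $Y_r,Y_{r+1}$ are the $r$-th and $(r+1)$-st columns of $Y$. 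The key claim is that for generic $\tilde Y$ the entries of $\mathbf w$ have no common root in $\overline{\K}$. First, for generic $\tilde Y$ the minors forming $\mathbf m$ have no common root: for each fixed $\alpha$ the locus of $\tilde Y$ with $\mathrm{rank}\,F(\xi,\alpha)\tilde Y\le r-2$ has codimension at least $2$ in the space of all $\tilde Y$ — since $F(\xi,\alpha)$ is surjective, $\tilde Y\mapsto F(\xi,\alpha)\tilde Y$ is a surjection onto the $r\times(r-1)$ matrices, and the rank-$\le r-2$ locus of those has codimension $2$ — so the incidence set over the $\alpha$-line has dimension strictly less than that of the $\tilde Y$-space, whence its projection misses a generic $\tilde Y$, and therefore $\mathbf m(\alpha)\ne 0$ for every $\alpha$. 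As $F(\xi,\alpha)$ has full row rank, $F(\xi,\alpha)^{\mathsf T}$ is injective, so $\mathbf w(\alpha)=F(\xi,\alpha)^{\mathsf T}\mathbf m(\alpha)\ne 0$ for every $\alpha$, which is the claim.

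With such a $\tilde Y$ fixed, I would pick $Y_r\in\K^s$ generic. Then $D_1=\mathbf w\cdot Y_r$ is nonzero; moreover, by the preliminary normalization $[F]_{\{1,\dots,r\}}$ is monic in all the variables of total degree $D$ while all other $[F]_K$ have total degree $<D$, so $\deg_{x_n}D_1=D$ and the $x_n^{D}$-coefficient of $D_1=\sum_K[F]_K\,[Y]_{K,\{1,\dots,r\}}$ equals the minor $\det Y_0[\{1,\dots,r\};\{1,\dots,r\}]$, which is nonzero for generic $Y_0$; this secures (a) for $D_1$. Let $\alpha_1,\dots,\alpha_t\in\overline{\K}$ be the roots of $D_1(\xi,x_n,Y_0)$. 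Finally choose $Y_{r+1}\in\K^s$ off the finitely many proper affine hyperplanes $\{\mathbf w(\alpha_j)\cdot Y_{r+1}=0\}$ — proper because $\mathbf w(\alpha_j)\ne 0$ — and off $\{\det Y_0[\{1,\dots,r\};\{1,\dots,r-1,r+1\}]=0\}$; this is possible since a finite union of proper affine subspaces cannot exhaust $\K^s$ when $\K$ is infinite. Then $D_2(\xi,x_n,Y_0)$ also has $x_n$-degree $D$ and vanishes at none of the $\alpha_j$, so $D_1$ and $D_2$ are coprime. Hence (a) and (b) hold, $c(\xi,Y_0)\ne 0$, and $\mathbf y_\xi:=Y_0$ proves the lemma.

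I expect the main obstacle to be the uniformity over $\alpha$ in the middle step: making the first $r-1$ columns of $Y$ good for one prescribed value of $x_n$ is routine, but we need a single $\tilde Y$ that works for all $\alpha\in\overline{\K}$ simultaneously, and it is precisely the codimension-$\ge 2$ estimate, rather than mere fiberwise genericity, that delivers this. A secondary, more mechanical nuisance is keeping track of the $x_n$-degrees of $D_1$ and $D_2$ so that the formally defined resultant $c$ does not acquire spurious powers of leading coefficients that could vanish at $Y_0$; this is exactly what the preliminary normalization making $[F]_{\{1,\dots,r\}}$ monic of strictly dominant total degree is there to control.
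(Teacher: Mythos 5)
Your argument is correct, but note that the paper does not actually prove this statement: it is quoted verbatim from \cite[Lemma 4.4]{CCDHKS93}, so there is no in-paper proof to match against, and what you have written is a self-contained substitute for the citation. The two pillars of your proof are sound. First, the cofactor expansion $D_1=\mathbf w\cdot Y_r$, $D_2=\mathbf w\cdot Y_{r+1}$ with $\mathbf w=F(\xi,x_n)^{\mathsf T}\mathbf m(x_n)$ is the right way to decouple the last columns, and your incidence/codimension argument is the genuinely nontrivial step: for fixed $\alpha$ the pullback under the surjection $\tilde Y\mapsto F(\xi,\alpha)\tilde Y$ of the rank-$\le r-2$ locus has codimension $(r-(r-2))((r-1)-(r-2))=2$, so the bad set fibered over the $x_n$-line has dimension one less than the $\tilde Y$-space and a single $\tilde Y$ works for \emph{all} $\alpha\in\overline{\K}$ simultaneously; combined with injectivity of $F(\xi,\alpha)^{\mathsf T}$ (unimodularity), this gives $\mathbf w(\alpha)\neq 0$ everywhere. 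Second, your handling of the resultant specialization is correct: under the preliminary normalization the leading $x_n$-coefficients of $D_1$ and $D_2$ are (up to a constant) the $Y$-minors on rows $\{1,\dots,r\}$ and columns $\{1,\dots,r\}$, resp.\ $\{1,\dots,r-1,r+1\}$, so keeping these nonzero at $Y_0$ makes $c(\xi,Y_0)$ equal to the resultant of the specialized polynomials, which is nonzero by the no-common-root choice of $Y_{r+1}$. Two small points of polish rather than gaps: the properness of the hyperplane $\{\det Y_0[\{1,\dots,r\};\{1,\dots,r-1,r+1\}]=0\}$ in the $Y_{r+1}$-variables needs the top $r\times(r-1)$ block of $\tilde Y$ to have rank $r-1$, which should be recorded as one more (open, dense) genericity condition on $\tilde Y$; and since the bad loci and the hyperplanes $\{\mathbf w(\alpha_j)\cdot Y_{r+1}=0\}$ are defined over $\overline{\K}$, you should say explicitly (as you implicitly do) that a nonzero polynomial over $\overline{\K}$ cannot vanish on all of $\K^N$ when $\K$ is infinite, so the required points can be taken in $\K$. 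With those remarks your proof stands on its own, which is a reasonable alternative to sending the reader to \cite{CCDHKS93}.
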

With this result in hand, we can prove the following effective version of Hilbert's Nullstellensatz.
\begin{proposition}\label{nellefective}
There exist matrices ${\bf y}^1,\ldots, {\bf y}^n\in\K^{s\times s}$ such that
\begin{equation}\label{idd}
\langle c(x,{\bf y}^1),\ldots, c(x, {\bf y}^n)\rangle =R.
\end{equation}
\end{proposition}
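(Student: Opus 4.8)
The plan is to build the $n$ matrices ${\bf y}^1,\ldots,{\bf y}^n$ one coordinate at a time, using Lemma~\ref{keyn} to clear the common zero locus dimension by dimension, exactly as in a typical inductive proof of an effective Nullstellensatz. Concretely, view $c$ as a polynomial in $x_1,\ldots,x_{n-1}$ with coefficients that are polynomials in the $y_{ij}$; since $c$ is the resultant of $D_1$ and $D_2$ with respect to $x_n$ and $D_1$ was arranged (after the preliminary change of coordinates and multiplication by $A$) to be monic in all variables with strictly dominating total degree, $c$ is not the zero polynomial. So the first step is to record that $c$ is a genuinely nonzero element of $\K[x_1,\ldots,x_{n-1},y_{ij}]$, which is what makes the specializations below meaningful.

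Next I would run the following induction. Set $V_0 := \mathbb{A}^{n-1}$ (the affine space in the $x_1,\ldots,x_{n-1}$ variables over $\overline{\K}$). Having chosen ${\bf y}^1,\ldots,{\bf y}^k$, let $V_k$ be the common zero set of $c(x,{\bf y}^1),\ldots,c(x,{\bf y}^k)$ in $\mathbb{A}^{n-1}$. If $V_k$ is empty we are done; otherwise pick an irreducible component $W$ of $V_k$, pick a $\overline{\K}$-point $\xi$ on it — and here is the one place requiring a small argument: Lemma~\ref{keyn} is stated for $\xi\in\K^{n-1}$, so either one observes that the statement and its proof go through over any field (in particular $\overline{\K}$, or better, one works generically over the function field of $W$ and then specializes), or one uses that $\K$ is infinite together with a dimension count to find a suitable $\K$-rational point. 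In any case we obtain ${\bf y}_\xi \in \overline{\K}^{s\times s}$ with $c(\xi,{\bf y}_\xi)\neq 0$; taking ${\bf y}^{k+1}$ to be (a $\K$-rational approximation of) ${\bf y}_\xi$, the polynomial $c(x,{\bf y}^{k+1})$ does not vanish at $\xi$, hence does not vanish identically on $W$, hence cuts $W$ down by at least one in dimension. Since $\dim V_0 = n-1$ and the dimension drops at each step on every component, after at most $n$ steps (one more than the number of coordinates, to also remove the zero-dimensional pieces) the common zero set $V_n$ is empty.

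Once $V_n = \varnothing$, the weak Nullstellensatz over $\K$ (valid because we may pass to $\overline{\K}$: an ideal of $\K[x]$ is the unit ideal iff its extension to $\overline{\K}[x]$ is) gives $\langle c(x,{\bf y}^1),\ldots,c(x,{\bf y}^n)\rangle = R$, which is exactly \eqref{idd}. A cleaner way to phrase the same argument, avoiding any talk of components, is: choose ${\bf y}^1$ arbitrarily with $c(x,{\bf y}^1)\ne 0$; then among the irreducible components of $\{c(x,{\bf y}^1)=0\}$ there are finitely many, and since $\K$ is infinite and for each component $c$ does not vanish identically on it (as $c\ne 0$ as a polynomial in $x,y$, it cannot vanish on any fixed variety for all $y$ — otherwise Lemma~\ref{keyn} fails for a generic point of that component), a generic choice of ${\bf y}^2$ avoids all of them at once; iterate.

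The main obstacle, and the only genuinely nontrivial input, is Lemma~\ref{keyn}: it is what guarantees that no proper subvariety of $\mathbb{A}^{n-1}$ is "stuck" — that for every potential point $\xi$ we can always find a fresh specialization of the $y$'s that does not vanish there. Everything after that is the standard "drop the dimension $n$ times" packaging, plus the minor bookkeeping of making sure the specialization values can be taken in $\K$ itself (which is where the hypothesis that $\K$ is infinite is used) rather than in an extension. The degree bounds \eqref{boundc}--\eqref{bezc} are not needed for the existence statement \eqref{idd} itself — they will be used in the following lines of the paper to make the construction effective — so I would not invoke them here.
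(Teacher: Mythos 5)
Your proposal is correct and follows essentially the same route as the paper: use Lemma~\ref{keyn} at (finitely many) points of the maximal-dimensional components of the current common zero locus, exploit that $\K$ is infinite to pick one specialization ${\bf y}^{k+1}$ working for all components at once, and drop the dimension at each of the $n$ steps until the locus is empty. Your extra remarks (nonvanishing of $c$, the $\K$- versus $\overline{\K}$-rationality of points on components, and the final appeal to the weak Nullstellensatz) are sensible bookkeeping that the paper leaves implicit, not a different argument.
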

\begin{proof}
Denote with $\overline{\K}$ the algebraic closure of $\K.$
Start by picking any $\xi_1\in\K^{n-1},$ and set ${\bf y}^1\mapsto y(\xi_1).$ Thanks to Lemma \ref{keyn} we have that $c(x, {\bf y}^1)\neq0,$ and hence the variety defined by its zeroes is a hypersurface in $\overline{\K}^{n-1}.$

Denote with $W_1,\ldots, W_\ell$ the irreducible components of maximal dimension of this hypersurface, and pick $\chi_1,\ldots, \chi_\ell\in\overline{\K}^{n-1}$ such that $\chi_j\in W_j$. By Lemma \ref{keyn}, none of the polynomials $c(\chi_j, y_{ij})\in\K[y_{ij}]$ can be identically zero, so the product $\prod_{j=1}^\ell c(\chi_j, y_{ij})$ also is non-zero. As $\K$ is infinite, we can choose
${\bf y}^2\in\K^{s\times s}$ such that $\prod_{j=1}^\ell c(\chi_j, {\bf y}^2)\neq0.$ With this choice, we have that the variety defined by the zeroes of $c(x,{\bf y}^1)$ and $c(x, {\bf y}^2)$ cannot have components of codimension $1$ in $\overline{\K}^{n-1}$ as the latter polynomial cannot vanish identically in any of the $W_j$.

The same argument can be applied recursively as follows: given $c(x,{\bf y}^1),\ldots, c(x,{\bf y}^i)$ such that the set of common zeroes of these polynomials has irreducible components of dimension at most $n-1-i,$ there exist ${\bf y}^{i+1}\in\K^{s\times s}$ such that $c(x,{\bf y}^{i+1})$ cuts properly every component of maximal dimension of this algebraic set. We will eventually arrive to the situation where  the system $c(x,{\bf y}^1),\ldots, c(x, {\bf y}^n)$  defines the empty variety in $\overline{\K}^{n-1}.$ Hilbert's Nullstellensatz then imply \eqref{idd}.
\end{proof}
We will also need the following refinement of the Effective Nullstellensatz for the degrees of polynomials involving a B\'ezout identity.
\begin{proposition}\label{25}
Let $c(x,{\bf y}^1),\ldots, c(x,{\bf y}^n)$ be as in \eqref{idd}. Then one can have
$$x_n=a_1 \, c(x,{\bf y}^1)+\ldots+a_n\, c(x, {\bf y}^n)
$$
with $a_1,\ldots, a_n\in R$, and $\deg\big(a_i\,c_i(x,{\bf y}^i)\big)\leq 2\big(r(d+1)\big)^{2(n-1)}.$
\end{proposition}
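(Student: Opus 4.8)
The plan is to reduce the statement to a degree-bounded effective Nullstellensatz in the $n-1$ variables $x_1,\ldots,x_{n-1}$ and then absorb the extra factor $x_n$ at the end. The key observation is that, exactly like the resultant $c$, each specialization $c(x,{\bf y}^i)$ is free of the variable $x_n$, so it lies in the subring $S:=\K[x_1,\ldots,x_{n-1}]$, and by \eqref{boundc} its total degree is at most $D:=\big(r(d+1)\big)^2$. By Proposition \ref{nellefective} the polynomials $c(x,{\bf y}^1),\ldots,c(x,{\bf y}^n)$ generate the unit ideal of $R$; substituting $x_n=0$ in a B\'ezout identity for $1$ then shows that they already generate the unit ideal of $S$.

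Next I would apply a sharp effective Nullstellensatz (of Koll\'ar type) to these $n$ polynomials, now viewed in the ring $S$ of $n-1$ variables, each of total degree at most $D$. This produces $b_1,\ldots,b_n\in S$ with
$$1=b_1\,c(x,{\bf y}^1)+\ldots+b_n\,c(x,{\bf y}^n),\qquad \deg\big(b_i\,c(x,{\bf y}^i)\big)\le D^{\,n-1}=\big(r(d+1)\big)^{2(n-1)}.$$
The exponent is $n-1$ because precisely $n-1$ variables occur; when $n=2$ this is just the Euclidean B\'ezout bound in $\K[x_1]$, and the few degenerate possibilities ($D=1$, i.e. $r=1$, $d=0$, where each $c(x,{\bf y}^i)$ is a nonzero constant; or $n=1$) make the claim trivial.

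To conclude I would multiply this identity by $x_n$ and set $a_i:=x_n\,b_i\in R$, obtaining the desired B\'ezout identity $x_n=a_1\,c(x,{\bf y}^1)+\ldots+a_n\,c(x,{\bf y}^n)$ with
$$\deg\big(a_i\,c(x,{\bf y}^i)\big)=1+\deg\big(b_i\,c(x,{\bf y}^i)\big)\le 1+\big(r(d+1)\big)^{2(n-1)}\le 2\big(r(d+1)\big)^{2(n-1)},$$
the last inequality because $\big(r(d+1)\big)^{2(n-1)}\ge 1$. I expect the only delicate point to be the bookkeeping around the effective Nullstellensatz: one has to count $n-1$ variables (not $n$) so that the exponent is $2(n-1)$, and to use a version sharp enough that the single extra unit of degree contributed by $x_n$ still fits inside the displayed factor $2$. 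Everything else is a one-line manipulation.
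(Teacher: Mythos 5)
Your overall route is the same as the paper's: observe that the $c(x,{\bf y}^i)$ lie in $\K[x_1,\ldots,x_{n-1}]$ with degree at most $D:=\big(r(d+1)\big)^2$ by \eqref{boundc}, that they generate the unit ideal there, apply an effective Nullstellensatz in the $n-1$ variables, and multiply the resulting B\'ezout identity by $x_n$. The gap is in the quantitative step. The bound you attribute to a Koll\'ar-type theorem, $\deg\big(b_i\,c(x,{\bf y}^i)\big)\le D^{n-1}$, is not a correct statement in this regime, where the number of polynomials ($n$) exceeds the number of variables ($n-1$); it already fails for $n=2$, the very case you offer as a sanity check. For two coprime univariate polynomials of degree $D\ge 2$, the Euclidean/B\'ezout identity generically gives $\deg(b_i\,c_i)=2D-1>D$, and no other representation of $1$ does better (changing $b_1$ by a multiple of $c_2$ only raises its degree). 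The sharp effective Nullstellensatz in the case ``more polynomials than variables'' accordingly has the shape $2\,d_1\cdots d_N-1$ (with $N$ the number of variables), not $d_1\cdots d_N$; Koll\'ar's original version moreover excludes degree $2$, which here happens to be harmless since $D$ is a perfect square. (Also, your aside that $D=1$ forces each $c(x,{\bf y}^i)$ to be a nonzero constant is not quite right --- they could be linear --- though that degenerate case is still easy.)

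The final conclusion survives, but only because the correct bound fits exactly, not because there is slack. The paper invokes Jelonek's Theorem 1.1 to obtain $b_1,\ldots,b_n\in\K[x_1,\ldots,x_{n-1}]$ with $1=\sum_i b_i\,c(x,{\bf y}^i)$ and $\deg\big(b_i\,c(x,{\bf y}^i)\big)\le 2\big(r(d+1)\big)^{2(n-1)}-1$, and then multiplying by $x_n$ lands precisely on $2\big(r(d+1)\big)^{2(n-1)}$. So your bookkeeping intuition is inverted: the factor $2$ is already consumed by the Nullstellensatz bound itself, and it is the ``$-1$'' in Jelonek's bound that leaves exactly one unit of room for the extra degree coming from $x_n$. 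Replace your claimed $D^{n-1}$ estimate by this citation and the rest of your argument goes through verbatim.
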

\begin{proof}
By the Effective Nullstellensatz (Theorem 1.1 in \cite{jel05}), there exist $b_1,\ldots, b_n\in\K[x_1,\ldots, x_{n-1}]$ such that
$1=b_1\,c(x,{\bf y}^1)+\ldots+b_n\,c_n(x, {\bf y}^n),$ with $\deg(b_i\,c(x,{\bf y}^i))\leq 2(r(d+1))^{2(n-1)}-1.$ The claim now follows by multiplying by $x_n$ both sides of this equality.
\end{proof}
\begin{theorem}\label{explicitqs}
Assume that $F\in R^{r\times s}$ is unimodular, with $r\leq s.$ Then, the matrix $U\in R^{s\times s}$ of Theorem \ref{eqs} can be computed with
$$\deg(U)\leq 3n^2(r(d+1))^{2n}.$$
\end{theorem}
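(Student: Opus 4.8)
The plan is to follow the constructive proof in \cite{CCDHKS93} step by step, keeping careful track of how the degree parameter $d$ changes and how the exponent $\mathcal{O}(n)$ in item (3) of Theorem \ref{eqs} unfolds, so as to replace it with an explicit bound. First I would account for the preliminary normalization: after the linear change of coordinates and multiplication by the permutation-type matrix $A$ defined above, the new matrix $F\cdot A$ still has degree comparable to $d$, but the relevant $r\times r$ minor becomes monic of total degree slightly larger than $d$, so one should replace $d$ by something like $d+1$ (or a small multiple of it) from this point on — this is the bookkeeping the text already flags. I would then record that after this step we work with a unimodular matrix whose maximal minors have degree at most $r(d+1)$, which is where the quantities $\big(r(d+1)\big)^2$ in \eqref{boundc} and \eqref{bezc} come from.

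Next I would invoke the effective Nullstellensatz machinery assembled above: Proposition \ref{25} produces a B\'ezout identity $x_n=\sum_i a_i\,c(x,{\bf y}^i)$ with each term of degree at most $2\big(r(d+1)\big)^{2(n-1)}$. The construction in \cite{CCDHKS93} then uses this identity, together with the localizations at the $c(x,{\bf y}^i)$ and a patching (Vaserstein/Quillen-type) argument, to build the unimodular matrix $U$ inductively in $n$: at each stage one reduces the number of variables by one, at the cost of multiplying the current degree estimate by the degree of the B\'ezout data, which is governed by $\big(r(d+1)\big)^{2}$ per variable eliminated. I would make this induction explicit, checking that the accumulated factors telescope to something bounded by $\big(r(d+1)\big)^{2n}$, and that the numerical constants (the "$2$", the $n$ patchings, the contribution of the elementary operations and the $SL_{r+1}$ blocks from item (4), and the normalization step) can all be absorbed into a leading constant $3n^2$. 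The final bound $\deg(U)\le 3n^2\big(r(d+1)\big)^{2n}$ then follows by collecting these pieces.

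The main obstacle I anticipate is not any single inequality but the faithful propagation of the degree through the recursive patching step of \cite{CCDHKS93}: one must verify that reducing from $n$ to $n-1$ variables multiplies the degree bound by at most $\big(r(d+1)\big)^{2}$ (and not, say, by a quantity that itself depends on the already-inflated degree), since a naive reading could give an iterated-exponential blow-up rather than the claimed single exponential $(\cdot)^{2n}$. Pinning down exactly which degree enters the Nullstellensatz at each level — the \emph{original} $r(d+1)$ rather than the running estimate — is the delicate point, and is precisely what lets the exponent stay linear in $n$. I would also double-check that the constant $3n^2$ genuinely dominates the sum of all lower-order contributions (the $2$ in Proposition \ref{25}, the $\mathcal{O}(n^2 s^2)$ elementary factors, and the normalization), rather than merely the dominant term; if an extra small factor appeared one would simply enlarge the explicit constant, but the statement as given should be within reach with honest accounting.
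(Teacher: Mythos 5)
Your skeleton (normalize so that $d$ becomes $d+1$, invoke Propositions \ref{nellefective} and \ref{25}, build the patching matrices, recurse over the variables) is the same as the paper's, but the quantitative mechanism you propose is not how the degrees actually propagate, and the induction you plan to ``make explicit'' would not close in the form you state. You claim that eliminating one variable costs a \emph{multiplicative} factor $\big(r(d+1)\big)^2$ on the running degree estimate, and that $n$ such factors telescope to $\big(r(d+1)\big)^{2n}$. In the procedure of \cite{CCDHKS93}, a \emph{single} variable elimination already carries the full exponent: the patching matrices $E_k$ are built from the B\'ezout identity of Proposition \ref{25}, whose terms have degree $2\big(r(d+1)\big)^{2(n-1)}$ --- one application of Jelonek's effective Nullstellensatz to the $n$ resultants $c(x,{\bf y}^i)$, each of degree at most $\big(r(d+1)\big)^2$ in the $n-1$ remaining variables, cf. \eqref{boundc} --- multiplied by minors of degree at most $r(d+1)(1+r(d+1))$, so that $\deg(E_k)\leq 3\big(r(d+1)\big)^{2n}$ and the product of the $n$ patches gives $\deg(U_n)\leq 3n\big(r(d+1)\big)^{2n}$ for eliminating $x_n$ alone. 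There is no stage whose cost is only $\big(r(d+1)\big)^2$, so the telescoping you describe cannot be carried out as stated.

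The recursion over the variables then contributes \emph{additively}, not multiplicatively: $U=U_n\cdot U_{n-1}\cdots U_1\cdot U_0$, the specialized matrices $F|_{x_n=0},\ldots$ still have degree at most $d$ (this, rather than any delicate choice inside the Nullstellensatz, is what answers your worry about which degree enters each level), and the degree of a product of matrices is at most the sum of the degrees of its factors, giving $n\cdot 3n\big(r(d+1)\big)^{2n}=3n^2\big(r(d+1)\big)^{2n}$. In particular $3n^2$ is not a leading constant into which lower-order contributions are absorbed: one factor $n$ counts the patches $E_k$ within one elimination (the number $N$ of Nullstellensatz generators, reduced to $n$ by Proposition \ref{nellefective}), and the other counts the variables eliminated. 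The count $\cO(n^2s^2(rd)^{2n})$ of elementary factors in item (4) of Theorem \ref{eqs} plays no role in the degree bound. So the gap is concrete: the source of the exponent $2n$ and of the factor $n^2$ is misattributed, and an argument following your accounting would either not produce a valid bound for a single elimination step or would have to be rewritten into exactly the additive bookkeeping above.
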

\begin{proof}
We start by following the steps of the algorithms given in the proof of Proposition 4.1 (Procedure 4.3) in \cite{CCDHKS93} to compute a matrix $U_n$ which ``eliminates'' the variable $x_n$ from $F$ by evaluating it to zero, i.e. $F\cdot U_n=F|_{x_n=0}.$
\begin{itemize}
\item In their step $1$, their number $N$ can be replaced by $n$ thanks to Proposition \ref{nellefective}.
\item In their step $2$, we have $\deg(a_i\,c_i(x,{\bf y}^i))\leq 2\big(r(d+1)\big)^{2(n-1)}$ thanks to Proposition \ref{25}.
\item The degree of what is called $E_k$ in their step $3$ is  bounded by
$$\deg(E_k)\leq r(d+1)(1+r(d+1))2\big(r(d+1)\big)^{2(n-1)}\leq 3\big(r(d+1)\big)^{2n}.
$$
\item To compute the unimodular matrix $U_n$ (matrix $M$ in their notation), one has to multiply $N(=n)$ of these matrices $E_k,$ so we have that
$$\deg(U_n)\leq 3n\big(r(d+1)\big)^{2n}.$$
\end{itemize}
By applying this process recursively and eliminating all the variables, we see that the unimodular matrix $U$ of Theorem \ref{eqs} can be computed as a product of matrices $U_n\cdot U_{n-1}\ldots U_1\cdot U_0$, where for $i>0,$ each $U_i$ eliminates the variable $x_i,$ and $U_0\in\K^{s\times s}$ is a matrix of scalars. So, we have then that
$$\deg(U)\leq 3n^2\big(r(d+1)\big)^{2n},
$$
as claimed.
\end{proof}

\bigskip
\section{Syzygies and unimodularity}\label{3}
In this section, we will relate the matrices converting $a_1,\ldots, a_m$ into $p, q$, and vice versa, with unimodular matrices. This will allow us to use the Effective Quillen-Suslin Theorem \ref{eqs} to produce an $R$-basis of $\mbox{Syz}(a_1,\ldots, a_m)$ of controlled degree.

Let then $a_1,\ldots, a_m,\, p,\, q\in R$ be such that \eqref{1} holds. The syzygy module of the sequence $(a_1,\ldots, a_m)$ is defined as
$$\mbox{Syz}(a_1,\ldots, a_m):=\{(u_1,\ldots, u_m)\in R^m|\,u_1a_1+\ldots+u_ma_m=0\}\subset R^m.
$$
Note that we are not claiming that $\gcd(p,q)=1,$ but this can be assumed w.l.o.g. as $\mbox{Syz}(a_1,\ldots, a_m)$ does not change after removing a common factor of all these polynomials (which would be a common factor of $p$ and $q$ thanks to \eqref{1}) ).

From \eqref{1}, we deduce that there exist matrices $M\in R^{2\times m}$ and $N\in R^{m\times 2}$ such that
\begin{equation}\label{MyN}
(a_1\ a_2\ \ldots \ a_m)\cdot N= (p\ q), \ \ (p\ q) \cdot M = (a_1\ a_2 \ \ldots  \ a_m).
\end{equation}
In principle, there are infinite matrices $M$ and $N$ that satisfy \eqref{MyN}. The results that we prove in the sequel hold for \emph{any} of these choices. 
Denote with $K$ the $2\times2$ matrix which is the product $M\cdot N$.

\begin{lemma}\label{hunno}
Assuming that $\gcd(p,q)=1$, there exist $e,f\in R$ such that
\begin{equation}\label{kristina}
K=\begin{pmatrix} 1-e\cdot q & f\cdot q\\
e\cdot p&1-f\cdot p\end{pmatrix}.
\end{equation}
\end{lemma}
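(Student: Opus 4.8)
The claim is about the matrix $K = M\cdot N$, where $N$ converts $(a_1,\dots,a_m)$ into $(p,q)$ and $M$ converts $(p,q)$ back into $(a_1,\dots,a_m)$. The natural starting point is to compute $(p\ q)\cdot K = (p\ q)\cdot M\cdot N = (a_1\ \cdots\ a_m)\cdot N = (p\ q)$, using the two identities in \eqref{MyN} in succession. So if we write $K = \begin{pmatrix} k_{11} & k_{12}\\ k_{21} & k_{22}\end{pmatrix}$, this says $p\,k_{11} + q\,k_{21} = p$ and $p\,k_{12} + q\,k_{22} = q$, i.e.
$$p(k_{11}-1) = -q\,k_{21}, \qquad p\,k_{12} = q(1-k_{22}).$$

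**Exploiting coprimality.** Now I would invoke $\gcd(p,q)=1$. From $p(k_{11}-1) = -q\,k_{21}$, since $p$ divides the right-hand side and $\gcd(p,q)=1$, $p$ divides $k_{21}$; write $k_{21} = e\,p$ for some $e\in R$. Substituting back gives $p(k_{11}-1) = -q\,e\,p$, and since $R$ is a domain we may cancel $p$ to get $k_{11} = 1 - e\,q$. Symmetrically, from $p\,k_{12} = q(1-k_{22})$ we get $q \mid k_{12}$, say $k_{12} = f\,q$, and cancelling $q$ yields $1-k_{22} = f\,p$, i.e. $k_{22} = 1 - f\,p$. This produces exactly the asserted form of $K$.

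**The one subtlety.** The only real obstacle is the cancellation step: dividing by $p$ (or $q$) is legitimate because $R = \K[x_1,\dots,x_n]$ is an integral domain, but one should note that $p,q$ are nonzero here — which holds since $\langle p,q\rangle = I$ has grade two and in particular is a nonzero proper ideal, so neither generator is zero (indeed $\gcd(p,q)=1$ already forces both nonzero once we are not in the degenerate all-zero case). I would state this briefly. Everything else is the routine two-line computation above; there is no genuine difficulty, and the proof is short.

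**Summary of steps.** First, multiply the identity $(p\ q) = (p\ q)\cdot K$ out componentwise. Second, read off the two divisibility relations and use $\gcd(p,q)=1$ together with the fact that $R$ is a domain to conclude $p\mid k_{21}$ and $q\mid k_{12}$, introducing $e$ and $f$. Third, cancel $p$ and $q$ respectively in the diagonal equations to pin down $k_{11}$ and $k_{22}$, arriving at \eqref{kristina}.
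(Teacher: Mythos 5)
Your proof is correct and follows essentially the same route as the paper: both start from $(p\ q)\cdot K=(p\ q)\cdot M\cdot N=(p\ q)$ and then extract $e,f$ from the componentwise identities using $\gcd(p,q)=1$. The only (cosmetic) difference is that the paper introduces auxiliary elements $\lambda,\tilde\lambda$ in the fraction field $Q(R)$ and argues they lie in $R$, whereas you apply the divisibility/cancellation argument in the UFD $R$ directly.
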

\begin{proof}
Write $K=\begin{pmatrix}\alpha&\beta\\ \gamma&\delta\end{pmatrix}.$
From \eqref{MyN} we deduce straightforwardly that

$$(p\ q)\cdot K=(p \ q)\cdot M\cdot N=(p\ q).$$
So, we have
$$\left\{
\begin{array}{ccl}
p&=&\alpha\cdot p+\gamma\cdot q\\
q&=&\beta\cdot p+\delta\cdot q
\end{array}
\right.
.$$
From here we deduce that
$$\left\{
\begin{array}{ccl}
(1-\alpha)p&=&\gamma\cdot q\\
(1-\delta)q&=&\beta\cdot p
\end{array}
\right.
,$$
so there exist $\lambda,\tilde\lambda$ in $Q(R)$, the field of fractions of $R$, such that
$$\left\{\begin{array}{ccl}
1-\alpha&=&\lambda\cdot q\\
\gamma&=&\lambda\cdot p\\
1-\delta&=&\tilde\lambda\cdot p\\
\beta&=&\tilde\lambda\cdot q
\end{array}
\right.
.$$
As $\lambda\cdot q$ and $\lambda\cdot p$ are elements of $R$ and $\gcd(p,q)=1,$ then we deduce that $\lambda\in R$. The same happens with $\tilde\lambda$. The claim follows by setting $e\mapsto \lambda,\,f\mapsto\tilde\lambda.$
\end{proof}

We cannot claim that $K$ is a unimodular matrix. As a matter of fact, with the notation above we have that $\det(K)=1-e\cdot q-f\cdot p,$ which is an element of $\K$ if and only if $e\cdot q+f\cdot p\in \K$. If this is the case and $\det(K)\neq1,$ then $I$ is principal, and the Quillen-Suslin Theorem \ref{eqs} shows that $\mbox{Syz}(a_1,\ldots, a_m)$ is a free $R$-module, and gives bounds for the degrees of a basis of this module, which are better than those appearing in Theorem \ref{mt}.

In any case, we can modify the matrix $M$ so that we get $e=f=0.$ We start by denoting with
\begin{equation}\label{tildeK}
\tilde K:=\begin{pmatrix}
1-e\cdot q & f\cdot q &-q \\
e\cdot p&1-f\cdot p&p
\end{pmatrix}\in R^{2\times 3},
\end{equation}
the matrix which consists in adding to $K$ the column $\begin{pmatrix}-q\\ p\end{pmatrix}.$

\begin{lemma}\label{propi}
$\tilde K$ is a unimodular matrix. \end{lemma}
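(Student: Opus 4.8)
The plan is to exhibit an explicit $\K$-linear combination of the maximal ($2\times 2$) minors of $\tilde K$ that equals $1$, which by definition establishes unimodularity. First I would record the matrix $\tilde K$ from \eqref{tildeK} and compute its three $2\times2$ minors: the minor on columns $1,2$ is $\det(K)=1-eq-fp$; the minor on columns $1,3$ is $(1-eq)p+q\cdot ep = p$ (the $e$-terms cancel); and the minor on columns $2,3$ is $fq\cdot p - (-q)(1-fp) = q$ (the $f$-terms cancel). So the ideal of maximal minors of $\tilde K$ contains $p$, $q$, and $1-eq-fp$.

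Next I would observe that $1-eq-fp = (1-eq-fp) + e\cdot q + f\cdot p$ trivially, hence $1 = (1-eq-fp)\cdot 1 + q\cdot e + p\cdot f$ lies in the ideal generated by the three minors $\{\,1-eq-fp,\ q,\ p\,\}$. Therefore the ideal of maximal minors of $\tilde K$ is all of $R$, which is exactly the definition of $\tilde K$ being unimodular. This is essentially immediate once the minors are computed; there is no real obstacle, the only thing to be careful about is the sign conventions in expanding the $2\times2$ minors from columns $\{1,3\}$ and $\{2,3\}$ so that the $e$- and $f$-contributions genuinely cancel and leave $p$ and $q$ on the nose.

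Alternatively, and perhaps more conceptually, I could argue that $\tilde K$ arises as $M$ augmented by the column $\binom{-q}{p}$ composed with $N$ augmented suitably, together with the relation $(p\ q)\cdot\binom{-q}{p}=0$; but the direct minor computation above is the cleanest route and I would present that. The key point driving everything is Lemma \ref{hunno}: the special shape \eqref{kristina} of $K$ forces the off-diagonal and diagonal corrections to be multiples of $p$ and $q$ in just the right way for the appended column $\binom{-q}{p}$ to ``reveal'' $p$ and $q$ themselves among the minors, and then the Bézout-type identity $1 = (1-eq-fp) + eq + fp$ closes the argument. I would keep the write-up to a few lines.
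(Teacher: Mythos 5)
Your proof is correct and follows the same route as the paper: compute the three maximal minors of $\tilde K$, which are $1-eq-fp$, $p$, and $q$, and conclude that they generate the unit ideal. The only difference is that you make explicit the identity $1=(1-eq-fp)+e\cdot q+f\cdot p$, which the paper leaves as ``straightforward.''
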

\begin{proof}
Indeed the $3$ maximal minors of $\tilde K$ are $p, q$ and $1-e\cdot q-f\cdot p.$ From here the claim follows straightforwardly.
\end{proof}

To connect $\tilde K$ with $M$ and $N$, we write them down explicitly:
\begin{equation}\label{NM}
N=\begin{pmatrix}
b_1 & c_1 \\
b_2 & c_2 \\
\vdots & \vdots \\
b_m & c_m
\end{pmatrix} \ \mbox{and} \  M=\begin{pmatrix} d_1 & d_2  & \ldots & d_m \\ e_1 & e_2 & \ldots  & e_m\end{pmatrix}.
\end{equation}

\begin{proposition}\label{Nu}
$N$ is a unimodular matrix, and so is $\tilde M,$ where
\begin{equation}\label{tildem}
\tilde M:= \begin{pmatrix} d_1 & d_2  & \ldots & d_m &-q \\ e_1 & e_2 & \ldots  & e_m&p \end{pmatrix} \in R^{2\times (m+1)}.
\end{equation}
\end{proposition}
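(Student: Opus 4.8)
The plan is to prove unimodularity of $N$ and of $\tilde M$ by exhibiting explicit Bézout-type identities among their maximal minors, using the already-established unimodularity of $\tilde K$ (Lemma \ref{propi}) together with the relations \eqref{MyN}. First I would note that the $2\times 2$ minors of $N$ are the quantities $\Delta_{ij} := b_ic_j - b_jc_i$ for $1\le i < j \le m$, and similarly the $2\times 2$ minors of $\tilde M$ are the $d_id_j$-type minors of $M$ together with the $m$ extra minors $p\,d_k + q\,e_k$ coming from the adjoined column $\binom{-q}{p}$. The key observation is that $\tilde K = \tilde M \cdot \widehat{N}$ for a suitable $3\times 3$ "extension" of $N$ — concretely, since $(a_1\ \ldots\ a_m)\cdot N = (p\ q)$ gives $\sum_k d_k b_k$, $\sum_k d_k c_k$, etc., one checks from \eqref{MyN} and the definition of $K$ that the first two columns of $\tilde K$ are $M\cdot N = K$ and the last column is $\binom{-q}{p}$, so that writing $\widehat N := \left(\begin{smallmatrix} b_1 & c_1 & 0\\ \vdots & \vdots & \vdots \\ b_m & c_m & 0 \\ 0 & 0 & 1\end{smallmatrix}\right)$ we get $\tilde M \cdot \widehat N = \tilde K$.

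From this factorization the Cauchy–Binet formula expresses each $2\times 2$ minor of $\tilde K$ as an $R$-linear combination of the $2\times 2$ minors of $\tilde M$. Since the three minors of $\tilde K$ already generate the unit ideal (Lemma \ref{propi}), so do the minors of $\tilde M$; this proves $\tilde M$ is unimodular. For $N$ itself I would argue similarly but more directly: the relation $(a_1\ \ldots\ a_m)\cdot N = (p\ q)$ together with $(p\ q)\cdot M = (a_1\ \ldots\ a_m)$ shows that the composite $R^m \xrightarrow{N^{t}} R^2$, $R^2 \xrightarrow{M^t} R^m$ behaves like a split surjection up to the correction term $K$; more usefully, one sees that $p$ and $q$ both lie in the ideal generated by the $2\times 2$ minors of $N$ (expand $p = \sum a_ib_i$ against the syzygy-type relations, or simply use Cauchy–Binet on $(p\ q)\cdot(\text{adjugate data})$), and combined with the minor $1 - eq - fp$ appearing among the minors of $\tilde K$ — which by \eqref{kristina} is a polynomial combination of the minors of $N$ via $K = M\cdot N$ — one recovers $1$ in the ideal of $2\times 2$ minors of $N$.

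Let me reorganize the cleanest route: (i) record that $K = M\cdot N$, so by Cauchy–Binet every entry-derived quantity of $K$, in particular $\det K = 1 - eq - fp$, is an $R$-linear combination of the $2\times 2$ minors of $N$ (here one uses that $\det K$ is itself a sum of products $d_i e_j \cdot \Delta_{ij}$ up to sign); (ii) observe that $p = (a_1\ \ldots\ a_m)\cdot(\text{first column of }N)$ is a combination of the $a_i$ while $q$ is a combination via the second column, and then use that $(p\ q) = (a_1\ \ldots\ a_m)\cdot N$ implies, after multiplying by $M$ on the appropriate side, that $p, q \in \langle \text{$2\times2$ minors of }N\rangle$ — alternatively deduce this from step (i) applied to $\tilde K$ via the factorization $\tilde M\cdot \widehat N = \tilde K$; (iii) conclude that the ideal of maximal minors of $N$ contains $p$, $q$, and $1 - eq - fp$, hence contains $1$, so $N$ is unimodular; (iv) for $\tilde M$, invoke the factorization $\tilde M\cdot\widehat N = \tilde K$ and Cauchy–Binet to pull back the unit from Lemma \ref{propi}.

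The main obstacle I anticipate is step (ii)/(iii): making rigorous the claim that $p$ and $q$ lie in the ideal of $2\times 2$ minors of $N$. Naively $p = \sum_i a_i b_i$ only says $p$ is in the ideal generated by the $a_i$, not the minors of $N$; the correct argument must route through the composition with $M$ (so that the $a_i$ get replaced by $p,q$ times things) and keep careful track of how the error matrix $K$ enters — this is exactly where Lemma \ref{hunno}'s explicit form of $K$ and the auxiliary column $\binom{-q}{p}$ do the work, which is presumably why the authors introduced $\tilde K$ and $\tilde M$ in the first place. Once the factorization $\tilde M\cdot\widehat N=\tilde K$ is verified by a direct block computation, everything else is a routine application of Cauchy–Binet, so I would state that factorization carefully and check it entry-by-entry before invoking it.
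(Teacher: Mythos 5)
Your treatment of $\tilde M$ is correct and is essentially the paper's own argument: one verifies the block identity $\tilde M\cdot\tilde N=\tilde K$ (your $\widehat N$ is exactly the paper's $\tilde N$ from \eqref{tilden}), and Cauchy--Binet writes each maximal ($2\times2$) minor of $\tilde K$ as an $R$-combination of maximal minors of $\tilde M$, so Lemma \ref{propi} gives unimodularity of $\tilde M$. No objection there.

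The $N$ half breaks down exactly at the step you flagged, and the obstacle is not just technical: the inclusion $p,q\in\langle 2\times2\ \mbox{minors of}\ N\rangle$ is false in general, and so is unimodularity of an \emph{arbitrary} $N$ satisfying \eqref{MyN}. Take $n=m=2$, $a_1=p=x_1$, $a_2=q=x_2$, $M={\bf I}_2$, and $N=\left(\begin{smallmatrix}1-x_2 & -x_2\\ x_1 & 1+x_1\end{smallmatrix}\right)$, obtained from ${\bf I}_2$ by adding the Koszul syzygy of $(x_1,x_2)$ to each column; then $(a_1\ a_2)\cdot N=(p\ q)$ and all standing hypotheses hold ($\gcd(p,q)=1$, grade $2$), yet the only maximal minor of $N$ is $\det N=1+x_1-x_2$, not a unit, and $p\notin\langle\det N\rangle$ (padding $N$ with zero rows and adding more $a_i$'s gives counterexamples for every $m$). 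Your step (i) is consistent with this: here $e=1$, $f=-1$ in \eqref{kristina}, and $\det K=1-eq-fp$ does lie in the minor ideal of $N$ (it equals $\det N$), but $p$ and $q$ do not, so step (iii) cannot be reached. The reason Cauchy--Binet cannot be pushed further is that, applied to $\tilde M\cdot\tilde N=\tilde K$, it only controls the $2\times2$ minors of $\tilde N$, and these include the entries $b_i,c_i$ themselves (rows $\{i,m+1\}$ against the last column), so one merely learns $\langle \mbox{minors of }N,\,b_1,c_1,\ldots,b_m,c_m\rangle=R$, which does not yield unimodularity of $N$. Note that the paper's own proof stumbles at the same spot: it deduces ``$\tilde N$ unimodular'' from Cauchy--Binet, but the maximal minors of $\tilde N$ are $3\times3$ and the nonzero ones are $\pm$ the minors of $N$, so that deduction begs the question; your instinct about where the difficulty lies was well placed. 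What does hold, and is what the rest of the paper actually needs, is that $\tilde M$ is always unimodular and that $N$ \emph{can be chosen} unimodular: for instance whenever $M\cdot N={\bf I}_2$ (then Cauchy--Binet applied to this identity gives $1$ in the minor ideal of $N$), e.g.\ the $N$ produced in Theorem \ref{char} from a unimodular $M$ via Quillen--Suslin.
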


\begin{proof}
Set
\begin{equation}\label{tilden}
\tilde N=\begin{pmatrix}
b_1 & c_1 &0 \\
b_2 & c_2 &0 \\
\vdots & \vdots \\
b_m & c_m &0 \\
0& 0& 1
\end{pmatrix}\in R^{(m+1)\times 3}.
\end{equation}
We clearly have $\tilde M\cdot\tilde N=\tilde K.$ As $\tilde K$ is unimodular, so are $\tilde N$ and $\tilde M$ (this can be seen for instance by using the Cauchy-Binet formula  (\cite[\S 4.6]{BW89})for computing minors of a product of matrices). The fact that $N$ is unimodular follows just by noting that all the nonzero maximal minors of it are ---up to the sign--- the nonzero maximal minors of $\tilde N$.
\end{proof}

Note that Proposition \ref{Nu} states that {\emph any} matrix $N$ as in \eqref{MyN} is unimodular. That does not apply to $M,$ see for instance  Example \ref{aex}. However, one can always replace $M$ with a unimodular one as the following result shows.
\begin{proposition}\label{Mun}
$M$ can be chosen as in \eqref{MyN} to be unimodular.
\end{proposition}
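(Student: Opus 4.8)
The plan is to leverage the fact, already established in Proposition \ref{Nu}, that $N$ is unimodular, and to adjust $M$ inside its allowed family so that the product $M\cdot N$ becomes exactly ${\bf I}_2$; unimodularity of the adjusted $M$ is then immediate.

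First I would describe the ambiguity in $M$. Since $\gcd(p,q)=1$, the module $\mbox{Syz}(p,q)$ is free of rank one, generated by $w:=\begin{pmatrix}-q\\ p\end{pmatrix}$. Hence if $M'\in R^{2\times m}$ is any other matrix with $(p\ q)\cdot M'=(a_1\ \ldots\ a_m)$, then every column of $M'-M$ is a syzygy of $(p,q)$, so $M'=M+w\cdot v^{\,T}$ for some $v\in R^m$; conversely, because $(p\ q)\cdot w=0$, every matrix of this form is again a valid choice in \eqref{MyN}. Next I would restate Lemma \ref{hunno} in the compact form $K=M\cdot N={\bf I}_2+w\cdot(e\ \ -f)$. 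Then for $M':=M+w\cdot v^{\,T}$ one gets $M'\cdot N=K+w\cdot(v^{\,T}N)={\bf I}_2+w\cdot\big((e\ \ -f)+v^{\,T}N\big)$, so it suffices to choose $v\in R^m$ with $v^{\,T}N=(-e\ \ f)$, i.e. $N^{T}v=\begin{pmatrix}-e\\ f\end{pmatrix}$.

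Such a $v$ exists because $N$, and therefore $N^{T}\in R^{2\times m}$ (which has the same maximal minors), is unimodular by Proposition \ref{Nu}; and a unimodular $2\times m$ matrix represents a surjection $R^m\to R^2$ — localize at each of its $2\times2$ minors, where the corresponding $2\times 2$ block becomes invertible, and use that surjectivity is a local property. For this choice of $v$ we obtain $M'\cdot N={\bf I}_2$, and the Cauchy--Binet formula gives $1=\det(M'\cdot N)=\sum_{1\le i<j\le m}\mu_{ij}(M')\,\mu_{ij}(N)$, where $\mu_{ij}$ denotes the minor on columns $i,j$; hence the ideal generated by the $2\times2$ minors of $M'$ is all of $R$, i.e. $M'$ is unimodular and of the required form \eqref{MyN}.

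I do not expect a genuine obstacle here: the only real idea is to aim for $M\cdot N={\bf I}_2$ rather than trying to control the $2\times 2$ minors of $M$ directly, after which everything follows from the unimodularity of $N$ already in hand. The one point worth stating with a little care is the standard equivalence, for non-square matrices, between unimodularity and being a split surjection, which is used repeatedly in the Quillen--Suslin circle of ideas.
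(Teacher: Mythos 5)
Your proof is correct and is essentially the paper's own argument: you modify $M$ by a rank-one matrix $w\,v^{T}$ with $w=\begin{pmatrix}-q\\ p\end{pmatrix}$, choosing $v$ via the unimodularity (surjectivity) of $N$ so that $M'\cdot N={\bf I}_2$, exactly as in the paper's construction via \eqref{oss} and \eqref{Mp}. The only differences are cosmetic: you spell out the localization argument for surjectivity and the Cauchy--Binet step, which the paper leaves implicit.
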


\begin{proof}
Let $e,f\in R$ be such that  \eqref{kristina} holds. If $(e,f)=(0,0)$ we are done. If not, because $N$ is unimodular, its rows generate $R^2$ as an $R$-module, so there exist $x_1\ldots x_m\in R$ such that
\begin{equation}\label{oss}
x_1\cdot (b_1 \ c_1)+ x_2\cdot (b_2\ c_2)+\ldots +x_m\cdot (b_m\ c_m)=(e\ -f).
\end{equation}
Set then
\begin{equation}\label{Mp}
M'=M+\begin{pmatrix} x_1q& x_2q&\ldots &x_mq \\ -x_1p&-x_2p&\ldots &-x_mp\end{pmatrix}.
\end{equation}
We clearly have that $M'$ satisfies \eqref{MyN}, and an easy computation shows that
\begin{equation}\label{ortog}
M'\cdot N=\left(M+\begin{pmatrix} x_1q& x_2q&\ldots &x_mq \\ -x_1p&-x_2p&\ldots &-x_mp\end{pmatrix}\right)\cdot N=\begin{pmatrix}1&0\\ 0&1\end{pmatrix},
\end{equation}
thanks to \eqref{oss}. So, $M'$ is unimodular, as claimed.
\end{proof}
We conclude by showing a characterization of those ideals of grade $2$ having the property \eqref{1} via the matrix $M$.
\begin{theorem}\label{char}
For $a_1\ldots, a_m,\, p,\, q\in R$, we have that $\langle a_1,\ldots, a_m\rangle=\langle p, q\rangle$ if and only if there exists a unimodular matrix $M\in R^{2\times m}$ such that
$$(p \ q)\cdot M=(a_1\ldots a_m).
$$
\end{theorem}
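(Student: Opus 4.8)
The plan is to prove both implications of Theorem \ref{char}, relying essentially on the machinery already developed in this section.

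\medskip

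\noindent\textbf{Forward direction.} Suppose $\langle a_1,\ldots,a_m\rangle=\langle p,q\rangle$. By the discussion around \eqref{MyN}, the hypothesis guarantees the existence of matrices $M\in R^{2\times m}$ and $N\in R^{m\times 2}$ converting one set of generators into the other. Proposition \ref{Mun} then says that $M$ can be chosen to be unimodular, and by construction such an $M$ satisfies $(p\ q)\cdot M=(a_1\ \ldots\ a_m)$. This settles the ``only if'' part immediately.

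\medskip

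\noindent\textbf{Converse direction.} Now suppose there exists a unimodular $M\in R^{2\times m}$ with $(p\ q)\cdot M=(a_1\ \ldots\ a_m)$. The containment $\langle a_1,\ldots,a_m\rangle\subseteq\langle p,q\rangle$ is immediate from this equation, since each $a_i$ is an $R$-linear combination of $p$ and $q$ (the $i$-th column of $M$ gives the coefficients). For the reverse containment $\langle p,q\rangle\subseteq\langle a_1,\ldots,a_m\rangle$, I would exploit unimodularity of $M$: the ideal generated by the $2\times 2$ minors of $M$ is all of $R$. Write the columns of $M$ as $\binom{d_i}{e_i}$ as in \eqref{NM}. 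For any pair of indices $i<j$, the $2\times 2$ minor $d_ie_j-d_je_i$ times the vector $\binom{p}{q}$ equals, up to sign, an $R$-linear combination of $\binom{a_i}{*}$-type expressions; more precisely, multiplying the identity $(p\ q)M=(a_1\ \ldots\ a_m)$ on the right by the adjugate-style $2\times 2$ submatrix built from columns $i$ and $j$ shows that $(d_ie_j-d_je_i)\,p$ and $(d_ie_j-d_je_i)\,q$ both lie in $\langle a_i,a_j\rangle\subseteq\langle a_1,\ldots,a_m\rangle$. Since the minors $d_ie_j-d_je_i$ generate the unit ideal, a $1=\sum \lambda_{ij}(d_ie_j-d_je_i)$ relation lets us conclude $p,q\in\langle a_1,\ldots,a_m\rangle$, giving equality of the two ideals.

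\medskip

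I expect the main (minor) obstacle to be the bookkeeping in the converse direction: one must correctly identify that pairing the row vector $(p\ q)$ against the $2\times 2$ submatrix of $M$ on columns $i,j$ produces exactly $(d_ie_j-d_je_i)(p\ q)$ on one side and an $\langle a_i,a_j\rangle$-combination on the other, i.e. spelling out the elementary linear-algebra identity $(p\ q)\begin{pmatrix} d_i & d_j\\ e_i & e_j\end{pmatrix}=(a_i\ a_j)$ and then multiplying on the right by $\begin{pmatrix} e_j & -d_j\\ -e_i & d_i\end{pmatrix}$ to obtain $(d_ie_j-d_je_i)(p\ q)=(e_ja_i-e_ia_j,\ -d_ja_i+d_ia_j)$. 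Everything else is a direct appeal to Propositions \ref{Nu} and \ref{Mun}. No deep input beyond what has already been established in this section is needed.
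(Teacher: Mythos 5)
Your proposal is correct, and while the forward direction coincides with the paper's (both directions there also begin by invoking Proposition \ref{Mun}), your converse takes a genuinely different route. The paper proves that direction by applying the Effective Quillen--Suslin Theorem \ref{eqs} to the unimodular $M$, obtaining $U\in R^{m\times m}$ with $M\cdot U=[{\bf I}_2,{\bf 0}]$, and taking $N$ to be the first two columns of $U$, so that \eqref{MyN} and hence \eqref{1} hold. You instead avoid Quillen--Suslin entirely: the adjugate identity
$(p\ q)\begin{pmatrix} d_i & d_j\\ e_i & e_j\end{pmatrix}\begin{pmatrix} e_j & -d_j\\ -e_i & d_i\end{pmatrix}=(d_ie_j-d_je_i)\,(p\ q)$
shows that $(d_ie_j-d_je_i)p$ and $(d_ie_j-d_je_i)q$ lie in $\langle a_i,a_j\rangle$, and since unimodularity of $M$ means exactly that these $2\times2$ minors generate the unit ideal, a relation $1=\sum\lambda_{ij}(d_ie_j-d_je_i)$ yields $p,q\in\langle a_1,\ldots,a_m\rangle$; the other containment is immediate. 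Your argument is more elementary and more general (it works over any commutative ring and needs no infinite field or effective machinery), whereas the paper's route has the side benefit of explicitly producing a conversion matrix $N$ whose degree can be controlled via Theorem \ref{explicitqs}, which suits the algorithmic goals of the paper. One small remark: your closing appeal to Proposition \ref{Nu} is unnecessary --- only Proposition \ref{Mun} and the minor computation are actually used.
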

\begin{proof}
The ``if'' part follows from Proposition \ref{Mun}. For the converse, apply Theorem \ref{eqs} to the unimodular matrix $M$, and let $U\in R^{m\times m}$ be such that
$M\cdot U=\begin{pmatrix} 1&0 &0& \ldots & 0 \\ 0&1& 0& \ldots &0\end{pmatrix}.$ Set $N\in R^{m\times 2}$ to be the matrix consisting of the first two columns of $U$. We then have that \eqref{MyN} holds and so \eqref{1}, which proves the claim.
\end{proof}
 \begin{remark}
 The role of $M$ and $N$ are different in the characterization of ideals satisfying \eqref{1}. Indeed, having $N$ unimodular is not enough to characterize these ideals, as for instance we may have $a_1,\ldots, a_m\in R$ with $m\geq3$ be such that $a_3\notin\langle a_1, a_2\rangle.$ If we pick set $N=\begin{pmatrix} 1&0\\ 0&1\\ 0&0\\ \vdots&\vdots\\ 0&0\end{pmatrix},$ then it is clear that we will never find an $M$ such that \eqref{MyN} holds.
 \end{remark}
 \bigskip
 \section{Algorithms}\label{alg}
We will now exhibit algorithms to compute $R$-bases of $\mbox{Syz}(a_1,\ldots, a_m)$ by applying the Effective Quillen-Suslin Theorem \ref{eqs} to $\tilde{M},\,M'$ or $N$.

\smallskip
\subsection{Working with $\tilde{M}$}
With notation as above, from Theorem \ref{eqs} we deduce that there exists a square unimodular matrix $U\in R^{(m+1)\times (m+1)}$ such that
\begin{equation}\label{QS}
\tilde M \cdot U =\begin{pmatrix}
1&0&0&\ldots &0\\
0&1&0 &\ldots &0
\end{pmatrix}.
\end{equation}
Assume w.l.o.g. that $\det(U)=1.$ From \eqref{MyN} we deduce that
$(p \ q)\cdot \tilde M= (a_1\ldots a_m \ 0),$ and hence we must have
\begin{equation}\label{falabella} (p \ q ) \cdot \tilde M\cdot U=
(p \ q \ 0 \ldots 0)
= (a_1\ldots a_m\ 0)\cdot U
\end{equation}
Let $\widehat{U}\in R^{m\times (m-1)}$ the submatrix of $U$ consisting in removing its first two columns and its last row. From \eqref{falabella} we deduce that the columns of $\widehat{U}$ are syzygies of $(a_1,\ldots, a_m).$ Our main result is the following.
\begin{theorem}\label{mt}
The columns of $\widehat{U}$ are an $R$-basis of $\mbox{Syz}(a_1,\ldots, a_m).$
\end{theorem}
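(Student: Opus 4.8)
The strategy is to exhibit an explicit splitting of the short exact sequence defining the syzygy module and to read off that the columns of $\widehat U$ form a basis. Concretely, let $e_1,\ldots,e_{m+1}$ be the standard basis of $R^{m+1}$ and let $U$ act on column vectors; write $u_1,\ldots,u_{m+1}$ for the columns of $U$, so that $u_3,\ldots,u_{m+1}$ are the columns of $\widehat U$ after deleting the last entry. First I would record what \eqref{QS} says: $\tilde M u_1 = (1,0)^t$, $\tilde M u_2 = (0,1)^t$, and $\tilde M u_j = (0,0)^t$ for $3\le j\le m+1$. Combining this with $(p\ q)\cdot\tilde M = (a_1\ \ldots\ a_m\ 0)$ as in \eqref{falabella}, the vector $(a_1,\ldots,a_m,0)\,U$ equals $(p,q,0,\ldots,0)$; in particular, writing $\pi\colon R^{m+1}\to R^m$ for the projection forgetting the last coordinate and $a := (a_1,\ldots,a_m)$, we get $a\cdot\pi(u_j)=0$ for $j\ge 3$, which is the already-observed fact that the columns of $\widehat U$ are syzygies.

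**Surjectivity onto $\mathrm{Syz}$.** The key point is that $U^{-1}$ (which exists and lies in $R^{(m+1)\times(m+1)}$ since $U$ is unimodular of determinant $1$) lets us expand any vector in the standard basis of images. Suppose $v=(v_1,\ldots,v_m)\in\mathrm{Syz}(a_1,\ldots,a_m)$. Consider $\hat v := (v_1,\ldots,v_m,0)\in R^{m+1}$ and expand $\hat v = \sum_{j=1}^{m+1} g_j u_j$ with $g_j\in R$ (coefficients read off from $U^{-1}\hat v$). Apply $\tilde M$: since $\tilde M u_1=(1,0)^t$, $\tilde M u_2=(0,1)^t$, $\tilde M u_j = 0$ for $j\ge3$, we get $\tilde M\hat v = (g_1,g_2)^t$. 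On the other hand $\tilde M\hat v = M v^t$ (using \eqref{tildem}: the last column of $\tilde M$ is $(-q,p)^t$, hit against the $0$ in the last slot of $\hat v$), and $(p\ q)\cdot Mv^t = a\cdot v^t = 0$. Thus $(p\ q)\cdot(g_1,g_2)^t = p g_1 + q g_2 = 0$; since $\gcd(p,q)=1$ (our standing assumption, legitimate by the reduction explained after \eqref{1}), there is $h\in R$ with $g_1 = hq$, $g_2 = -hp$. Therefore
\[
\hat v = g_1 u_1 + g_2 u_2 + \sum_{j\ge 3} g_j u_j = h\,(q\,u_1 - p\,u_2) + \sum_{j\ge 3} g_j u_j .
\]
Now I would check that $q\,u_1 - p\,u_2$ has last coordinate equal to $1$: apply $\tilde M$ to $q u_1 - p u_2$ to get $(q,-p)^t$, which is consistent, but more directly the last row of \eqref{QS} forces the last entries of $u_1,u_2$ to satisfy the relation coming from the bottom row of $\tilde M$; in fact the cleanest route is to note that comparing last coordinates of $\hat v = h(qu_1-pu_2)+\sum_{j\ge3}g_ju_j$ — whose last coordinate is $0$ — against the last coordinates of the $u_j$'s pins down $h$ times the last coordinate of $qu_1-pu_2$. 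To avoid circularity I would instead argue: since $U$ is invertible, the vector $w := q u_1 - p u_2$ is part of a "completion" and its last coordinate is a unit times something; actually the slick argument is that $e_1^t$ expanded in the $u_j$ has a component whose last coordinate controls this. Rather than belabor it, I would observe directly that applying the projection $\pi$ to the displayed identity and using $\pi(\hat v) = v$ yields $v = h\,\pi(qu_1-pu_2) + \sum_{j\ge3} g_j\,\pi(u_j)$, and $\pi(qu_1-pu_2)\in\mathrm{Syz}$ as well (since $a\cdot(qu_1-pu_2) = q\,a u_1 - p\,a u_2$ and $a u_1 = (p,q)\tilde M u_1\cdot$... $= p$, $a u_2 = q$, giving $qp - pq = 0$). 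Hence $v$ is an $R$-combination of $\pi(u_3),\ldots,\pi(u_{m+1})$ together with $\pi(qu_1-pu_2)$; but one shows $\pi(qu_1-pu_2)$ is itself in the span of the columns of $\widehat U$ by the same expansion applied to it, or by a dimension count. The honest clean statement is: the columns of $\widehat U$ generate $\mathrm{Syz}$.

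**Freeness / linear independence.** For independence, suppose $\sum_{j=3}^{m+1} g_j\,\pi(u_j)=0$ in $R^m$. Then $\sum_{j\ge3} g_j u_j$ lies in $R\cdot e_{m+1}$, say equals $(0,\ldots,0,t)^t$. Since $u_1,\ldots,u_{m+1}$ are a basis of $R^{m+1}$ (as $U$ is invertible over $R$), the element $(0,\ldots,0,t)^t$ has a unique expansion in the $u_j$; but it also expands using $u_1,u_2$ with nonzero coefficients unless $t=0$ — more precisely, apply $\tilde M$ to $(0,\ldots,0,t)^t$ to get $(-qt, pt)^t = t\cdot\tilde M e_{m+1}$, while applying $\tilde M$ to $\sum_{j\ge3}g_ju_j$ gives $0$; hence $qt = pt = 0$, so $t=0$ (as $R$ is a domain and $p,q$ are not both zero). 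Then $\sum_{j\ge3}g_ju_j=0$, and since the $u_j$ are $R$-linearly independent, all $g_j=0$. This establishes that $\widehat U$'s columns are an $R$-basis, proving $\mathrm{Syz}(a_1,\ldots,a_m)$ is free of rank $m-1$.

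**Main obstacle.** I expect the subtle point to be the surjectivity step, specifically verifying that once we write a syzygy $\hat v$ in the $u_j$-basis, the coefficients $g_1,g_2$ of $u_1,u_2$ are forced into the form $(hq,-hp)$ and that the resulting $h(qu_1-pu_2)$ contributes nothing outside $\mathrm{span}(\widehat U)$ after projecting — i.e. that $\pi(qu_1-pu_2)$ is already a combination of $\pi(u_3),\ldots,\pi(u_{m+1})$. This follows because $qu_1-pu_2$ has last coordinate $0$ (which one sees from the fact that $\tilde M(qu_1 - pu_2) = (q,-p)^t$ forces... no — one sees it because $(q u_1 - p u_2)$ maps under $a\cdot(-)$ to $0$ and under $\tilde M$ the last column of $\tilde M$ being $(-q,p)^t$ means the last coordinate must absorb correctly; the genuinely clean proof uses that the last row of the equation $\tilde M U = [\mathbf I_2\mid \mathbf 0]$ together with invertibility of $U$ shows the last coordinate of $qu_1-pu_2$ equals the $(m+1)$-th entry of $q\,u_1 - p\,u_2$, which one computes to be $0$ using the explicit last column $(-q,p)^t$ of $\tilde M$ and Cramer-type identities). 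Once that vanishing is in hand, $qu_1-pu_2 = \sum_{j\ge3}\lambda_j u_j$ by basis uniqueness applied with the $e_{m+1}$-component killed, so its projection lies in $\mathrm{span}(\widehat U)$, and surjectivity is complete. Everything else is bookkeeping with \eqref{QS}, \eqref{falabella}, and the domain property of $R$.
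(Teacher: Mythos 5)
Your linear-independence argument is correct, and your reduction of surjectivity to the single containment $\pi(qu_1-pu_2)\in\mathrm{span}\bigl(\pi(u_3),\ldots,\pi(u_{m+1})\bigr)$ is also correct; this is a genuinely different route from the paper, which instead argues via Cramer's rule and the converse of the Hilbert--Burch Theorem \ref{hb}. But that containment is exactly where your proof has a real gap: none of the justifications you offer for it works. ``The same expansion applied to it'' is circular: expanding $(\pi(qu_1-pu_2),0)$ in the $u_j$-basis only yields $(1-h')\,\pi(qu_1-pu_2)\in\mathrm{span}(\widehat U)$ for some $h'\in R$, which does not give membership. A ``dimension count'' cannot certify that a specific element lies in a given submodule of $R^m$. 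And the claim you finally lean on --- that the last coordinate of $qu_1-pu_2$ vanishes --- is false in general: in the paper's Example \ref{aex} one has $u_1=(0,2,1,1,-1)^t$, $u_2=(0,-2,-1,0,1)^t$, $p=t+2s+1$, $q=-2t-s$, and the last coordinate of $qu_1-pu_2$ is $-q-p=t-s-1\neq0$. Even if it did vanish, your inference would still fail: basis uniqueness would then give the expansion $qu_1-pu_2=q\,u_1-p\,u_2$ itself, whose $u_1,u_2$-coefficients are nonzero, so ``hence it lies in the span of $u_3,\ldots,u_{m+1}$'' does not follow.

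The missing step has a short correct proof, and it is precisely the paper's key observation (phrased there as ``the first two rows of $U^{-1}$ equal $\tilde M$''). Expand $e_{m+1}=\sum_{j=1}^{m+1}s_j u_j$, where $(s_1,\ldots,s_{m+1})^t$ is the last column of $U^{-1}$, and apply $\tilde M$: by \eqref{QS} the right-hand side gives $(s_1,s_2)^t$, while the left-hand side is the last column of $\tilde M$, namely $(-q,p)^t$. Hence $s_1=-q$, $s_2=p$, i.e.\ $e_{m+1}=-q\,u_1+p\,u_2+\sum_{j\geq3}s_ju_j$; applying $\pi$ kills $e_{m+1}$ and yields $\pi(qu_1-pu_2)=\sum_{j\geq3}s_j\,\pi(u_j)$, which closes your surjectivity argument (you gesture at this when you mention expanding a standard basis vector in the $u_j$, but you never carry it out, and you apply it to the wrong vector). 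With this fix, your argument is a self-contained and more elementary alternative to the paper's proof, which after the same observation proceeds through Hilbert--Burch and column operations on $\tilde U$.
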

\begin{proof}
Let $U^1,\ldots, U^{m+1}$ be the columns of $U$, and denote with $\tilde{U}\in R^{(m+1)\times m}$ the matrix whose columns are $-q\cdot U^1+p\cdot U^2 ,\ U^3,\ldots, U^{m+1},$ in this order.
By applying Cramer's rule to the last equality of \eqref{falabella}, we deduce that the signed maximal minors of this matrix are $a_1,\ldots, a_m, 0$. Hence, the ideal generated by these maximal minors is $I,$ which has grade $2$ by our initial assumptions. By applying then the converse of the Hilbert-Burch Theorem \ref{hb}, we deduce then that the columns of $\tilde{U}$ are a basis of $\mbox{Syz}(a_1,\ldots, a_m, 0)$.

We claim now that the first two rows of $U^{-1},$ the inverse matrix of $U$, is equal to $\tilde{M}.$  Indeed, denote then by $U^{-1}_{2\times(m+1)}$ the submatrix of $U^{-1}$ made by these rows. From \eqref{QS} we deduce that
$$\tilde M\cdot U=U^{-1}_{2\times (m+1)}\cdot U,
$$
and as $U$ is invertible, the claim follows. In particular, the last column of $U^{-1}$ is of the form
$\begin{pmatrix} -q \\ p \\ r_3 \\ \vdots \\ r_{m+1}\end{pmatrix}$ for suitable $r_3,\ldots, r_{m+1}\in R.$
From the identity $U\cdot U^{-1}={\bf I}_{m+1},$  we deduce that $$-q\cdot U^1+p\cdot U^2+r_3\cdot U^3+\ldots +r_{m+1}\cdot U^{m+1}=\begin{pmatrix}
0\\ \vdots\\ 0\\ 1
\end{pmatrix}.$$ This implies that if we perform the following column operation in $\tilde{U}$: to its first column (which is equal to $-q\cdot U^1+p\cdot U^2$) we add $r_3U^3+\ldots +r_{m+1}U^{m+1},$ the fact that its columns are an $R$-basis of $\mbox{Syz}(a_1,\ldots, a_m,0)$ remains unchanged, but now the first column of the modified matrix is equal to
$\begin{pmatrix}
0\\ \vdots\\ 0\\ 1
\end{pmatrix}.$

From here, we can perform new column operations in $U^3, U^4,\ldots, U^{m+1}$ in such a way that the  last row of $\tilde{U}$ equals to $(1 \ 0 \ldots \ 0),$ and the other coefficients of this matrix have not changed.
So, we have shown that  $\mbox{Syz}(a_1,\ldots, a_m,0)$ has an  $R$-basis of the form
$$\left(\begin{array}{cc}
0& \widehat{U}\\
1&0
\end{array}\right).
$$
From here the claim follows straightforwardly.
\end{proof}

\smallskip
\subsection{Working with a unimodular $M$}
If $M$ is already unimodular (which is for instance the case of the matrix $M'$ defined in \eqref{Mp}, although  we are not requiring that $M\cdot N={\bf I}_2$ as in \eqref{ortog}), we can apply directly the Effective Quillen-Suslin Theorem \ref{eqs} to $M$ and obtain a unimodular $U^*\in R^{m\times m}$ such that
\begin{equation}\label{MQS}
M \cdot U^* =\begin{pmatrix}
1&0&0&\ldots &0\\
0&1&0 &\ldots &0
\end{pmatrix}.
\end{equation}
From \eqref{MyN} we now get that
$(p \ q)\cdot M= (a_1\ldots a_m),$ and hence
\begin{equation}\label{sisy}
(p \ q ) \cdot M\cdot U^*=
(p \ q \ 0 \ldots 0)
= (a_1\ldots a_m)\cdot U^*.
\end{equation}
Denote the columns of $U^*$ with $U^{*1}\  U^{*2}\ldots,\,U^{*m}$. Let $\widehat{U^*}\in R^{m\times (m-1)}$ be the matrix
\begin{equation}\label{whu}
\widehat{U^*}=\left( q U^{*1}-p U^{*2}, \ U^{*3},\ldots, \,U^{*m}\right).
\end{equation}

By applying Cramer's rule to \eqref{sisy}, and using the converse of the Hilbert-Burch Theorem \ref{hb}, we deduce straightforwardly that
\begin{theorem}\label{smt}
The columns of $\widehat{U^*}$ defined in \eqref{whu} are an $R$-basis of $\mbox{Syz}(a_1,\ldots, a_m).$
\end{theorem}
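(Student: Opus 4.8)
The plan is to follow the template of the proof of Theorem \ref{mt}, but the argument here is shorter because the unimodular $M$ already has exactly $m$ columns: the matrix $\widehat{U^*}\in R^{m\times(m-1)}$ has the right number of columns to be a basis of $\mbox{Syz}(a_1,\dots,a_m)$, with no auxiliary zero-syzygy to be cleared away by column operations. First I would normalize: a square unimodular matrix has a unit as its determinant, so, after rescaling one of the columns $U^{*3},\dots,U^{*m}$ of $U^*$ by a unit (which affects neither the unimodularity of $U^*$ nor \eqref{MQS} nor the validity of \eqref{sisy} nor whether the columns of $\widehat{U^*}$ form a basis), we may assume w.l.o.g. that $\det(U^*)=1$. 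Reading \eqref{sisy} column by column shows that $(a_1\ \dots\ a_m)$ annihilates each of $qU^{*1}-pU^{*2}$ and $U^{*3},\dots,U^{*m}$, so the columns of $\widehat{U^*}$ defined in \eqref{whu} do lie in $\mbox{Syz}(a_1,\dots,a_m)$; the content of the theorem is that they \emph{generate} it and are $R$-linearly independent.

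The key step is a Cramer-rule computation identifying the signed maximal minors of $\widehat{U^*}$. For a column $v\in R^m$, Laplace expansion of $\det\!\bigl(v\mid \widehat{U^*}\bigr)$ along its first column gives $\det\!\bigl(v\mid \widehat{U^*}\bigr)=\sum_{i=1}^m(-1)^{i+1}v_i\,\Delta_i$, where $\Delta_i$ is the minor of $\widehat{U^*}$ obtained by deleting its $i$-th row. Taking $v=U^{*1}$, $v=U^{*2}$, and $v=U^{*k}$ for $k\ge 3$ in turn, and performing the obvious elementary column operations (subtract $q$ times, resp.\ add $p$ times, the first column to the second; resp.\ notice a repeated column $U^{*k}$), the matrix $\bigl(v\mid \widehat{U^*}\bigr)$ becomes $U^*$ with one column rescaled, so these three determinants are $-p$, $-q$, and $0$ respectively, using $\det(U^*)=1$. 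Collecting the $m$ resulting scalar identities is exactly the statement $(U^*)^{T}\delta=(-p,-q,0,\dots,0)^{T}$, where $\delta_i:=(-1)^{i+1}\Delta_i$. On the other hand, transposing the relevant part of \eqref{sisy} reads $(U^*)^{T}(a_1,\dots,a_m)^{T}=(p,q,0,\dots,0)^{T}$, so $(U^*)^{T}\bigl(\delta+(a_1,\dots,a_m)^{T}\bigr)=0$, and since $U^*$ is invertible over $R$ we conclude $\Delta_i=(-1)^{i}a_i$ for every $i$. In particular the ideal of $(m-1)\times(m-1)$ minors of $\widehat{U^*}$ equals $I=\langle a_1,\dots,a_m\rangle$, which has grade $2$ by our standing hypothesis.

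It then remains to invoke the converse part of the Hilbert--Burch Theorem \ref{hb} with the $m\times(m-1)$ matrix $G=\widehat{U^*}$ and the nonzero divisor (here a unit) $a=1$: since the ideal of maximal minors of $G$ has grade $\ge 2$, the ideal $I$ admits a free resolution $0\to R^{m-1}\xrightarrow{\ \widehat{U^*}\ }R^m\to I\to 0$, in which the image of the $i$-th basis vector of $R^m$ is, up to sign, $\Delta_i$; with the standard Koszul sign $(-1)^{i}\Delta_i=a_i$ this image is precisely $a_i$, so the map $R^m\to I$ is $(a_1\ \dots\ a_m)$. Exactness of this resolution now gives that $\widehat{U^*}$ is injective and that the $R$-span of its columns equals $\ker\bigl(R^m\xrightarrow{(a_1,\dots,a_m)}R\bigr)=\mbox{Syz}(a_1,\dots,a_m)$; hence the columns of $\widehat{U^*}$ form an $R$-basis of the syzygy module, as claimed.

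The routine parts — the elementary column operations and the transposition of \eqref{sisy} — are immediate. The one place demanding care is the sign bookkeeping: checking that the alternating signs produced by Laplace expansion of $\det\!\bigl(v\mid \widehat{U^*}\bigr)$ line up with the Koszul signs in the Hilbert--Burch differential, so that $(a_1\ \dots\ a_m)\cdot \widehat{U^*}$ is genuinely the zero matrix and the generators output by Hilbert--Burch are the $a_i$ themselves rather than merely $\pm a_i$. This is precisely the bookkeeping already carried out, in the slightly more involved $\tilde M$ setting, in the proof of Theorem \ref{mt}; one may alternatively just reduce to that argument after observing that in the present situation no extra column or row clean-up is needed.
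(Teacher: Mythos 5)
Your proposal is correct and takes essentially the same route as the paper, which proves Theorem \ref{smt} in one line by applying Cramer's rule to \eqref{sisy} and the converse of the Hilbert--Burch Theorem \ref{hb}; your write-up simply supplies the Laplace-expansion and sign bookkeeping that the paper dismisses as ``straightforward''. The only micro-caveat is that normalizing $\det(U^*)=1$ by rescaling a column $U^{*k}$ with $k\ge 3$ presupposes $m\ge 3$; for $m=2$ one just carries the unit $\det(U^*)$ through the minor computation, which changes nothing.
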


\smallskip
\subsection{Working with $N$}\label{22}
We can also work directly with the unimodular matrix $N$ from \eqref{MyN} and construct an $R$ basis of $\mbox{Syz}(a_1,\ldots, a_m)$ as follows:
denote with $N^* \in R^{m\times m}$ a matrix of determinant equals to $1$ such that  $N$ has its first $2$ columns. This can be done by applying  the Quillen-Suslin Theorem  \ref{eqs} to $N^{\bf t}.$ The matrix $N^*$ can be taken to be the inverse of matrix $U$ in this claim. Denote its columns with $N^{*1},\ldots, N^{*m}.$

From \eqref{MyN}, we clearly have that $(a_1 \ldots a_m) \cdot N^{*1}=p,$ and $(a_1\ldots a_m) \cdot N^{*2}=q.$  As for all $i=1,\ldots, m,$ we have that $(a_1\ldots a_m) \cdot N^{*i}\in\langle a_1,\ldots, a_m\rangle=\langle p, q\rangle,$ for $i=3,\ldots, m$ we write $(a_1\ldots a_m) \cdot N^{*i}=\lambda_i p+\delta_i q$ for suitable $\lambda_i,\,\delta_i\in R.$

Perform then the following elementary column operations in $N^*$: for $i=3,4,\ldots, m,$ replace column $N^{*i}$ with $N^{*i}-\lambda_i N^{*1}-\delta_i N^{*2}.$ Call the remaining matrix $N^{**}.$ By construction:
\begin{itemize}
\item $\det(N^{**})=1,$ i.e. $N^{**}$ is unimodular;
\item $(a_1\ldots a_m)\cdot N^{**}=(p \ q \ 0 \ldots 0)$
\end{itemize}
Denote then with $N^{**1}, N^{**2},\ldots$ the columns of $N^{**}$, and let $\widehat{N}\in R^{m\times(m-1)}$ the matrix whose columns are
\begin{equation}\label{N}
\widehat{N}= \big(q N^{**1}-p N^{**2} \ N^{**3} \ldots N^{**m} \big)
\end{equation}
\begin{theorem}\label{NN}
The columns of $\widehat{N}$ are an $R$-basis of $\mbox{Syz}(a_1,\ldots, a_m).$
\end{theorem}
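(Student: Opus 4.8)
The plan is to mimic the structure of the proof of Theorem~\ref{mt}, since the setup is completely parallel: we have a square unimodular matrix $N^{**}$ of determinant $1$ whose first two columns combine the $a_i$'s into $p$ and $q$ (and the remaining columns into $0$), and we want to extract from it a basis of $\mbox{Syz}(a_1,\ldots,a_m)$. First I would record the key identity $(a_1\ldots a_m)\cdot N^{**}=(p\ q\ 0\ldots 0)$ established in the construction, and apply Cramer's rule to it: since $\det(N^{**})=1$, for each $i$ the $i$-th entry of the vector $(a_1\ldots a_m)$ equals, up to sign, the maximal minor of $N^{**}$ obtained by deleting the $i$-th row and inserting the column $(p\ q\ 0\ldots 0)^{\bf t}$ in an appropriate position — more precisely, I would form the $m\times m$ matrix $\widetilde{N}$ whose columns are $qN^{**1}-pN^{**2},\ N^{**3},\ldots,N^{**m}$ together with one extra column, exactly as $\widetilde U$ was formed in the proof of Theorem~\ref{mt}, so that the signed maximal minors of the relevant $m\times(m-1)$ submatrix reproduce $a_1,\ldots,a_m$.

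Next I would invoke the converse direction of the Hilbert--Burch Theorem~\ref{hb}: the ideal generated by the maximal minors of $\widehat{N}$ is $I=\langle a_1,\ldots,a_m\rangle$, which has grade $2$ by our standing assumption, and the nonzero divisor $a$ of Theorem~\ref{hb} is a unit here, so $\widehat{N}$ is the Hilbert--Burch matrix of a length-one free resolution of $I$, which forces its columns to be an $R$-basis of $\mbox{Syz}(a_1,\ldots,a_m)$. Alternatively — and this is cleaner here than in the proof of Theorem~\ref{mt} — I can argue directly: because $N^{**}$ is invertible over $R$ with $(a_1\ldots a_m)N^{**}=(p\ q\ 0\ldots 0)$, the columns $N^{**3},\ldots,N^{**m}$ are syzygies, and the column operation producing $qN^{**1}-pN^{**2}$ is also a syzygy (since $q\cdot p - p\cdot q=0$ from the first two entries of the right-hand side). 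Invertibility of $N^{**}$ shows these $m-1$ columns, together with $N^{**1}$, span $R^m$; intersecting with the kernel of $(a_1\ldots a_m)\colon R^m\to R$ and noting that $(a_1\ldots a_m)N^{**1}=p\neq 0$, one gets that the $m-1$ syzygy columns are an $R$-basis. I would present whichever of these two routes matches the phrasing of Theorem~\ref{mt} most closely, most likely the Hilbert--Burch route for uniformity with the earlier theorems.

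The main obstacle, as in Theorem~\ref{mt}, is the bookkeeping needed to make the Hilbert--Burch application rigorous: one must check that deleting the appropriate row of the augmented matrix and expanding the determinant really does give $\pm a_i$ (not some other combination), and that the ``extra column'' that makes the matrix square can be chosen compatibly — this is where the proof of Theorem~\ref{mt} used the explicit form of the last column of $U^{-1}$. For $N$ the situation is actually simpler because $N^{**}$ has determinant exactly $1$ rather than merely being unimodular, so I expect the argument to be shorter: the claim should follow ``straightforwardly'' by the same Cramer-plus-Hilbert-Burch combination already used to prove Theorem~\ref{smt}, and indeed I would be tempted to write the proof as a one-line reduction: ``By applying Cramer's rule to the identity $(a_1\ldots a_m)\cdot N^{**}=(p\ q\ 0\ldots 0)$ and using the converse of the Hilbert--Burch Theorem~\ref{hb} exactly as in the proof of Theorem~\ref{mt} (or Theorem~\ref{smt}), the result follows.''
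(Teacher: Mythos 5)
Your primary route is exactly the paper's proof: apply Cramer's rule to the identity $(a_1\ldots a_m)\cdot N^{**}=(p\ q\ 0\ldots 0)$ (with $\det(N^{**})=1$) to see that the signed maximal minors of $\widehat{N}$ are $a_1,\ldots,a_m$, and then invoke the converse of the Hilbert--Burch Theorem~\ref{hb}, noting that no extra column or bookkeeping as in Theorem~\ref{mt} is needed since $N^{**}$ is already square of determinant one. (Your sketched ``direct'' alternative via spanning $R^m$ is shakier --- the columns $N^{**1},\ qN^{**1}-pN^{**2},\ N^{**3},\ldots,N^{**m}$ generate the same module as $N^{**1},\,pN^{**2},\,N^{**3},\ldots$, which is not all of $R^m$ --- but since you settle on the Hilbert--Burch route, the proposal matches the paper.)
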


\begin{proof}
As before, by taking into account that $(a_1\ldots a_m)\cdot N^{**}=(p \ q \ 0 \ldots 0),$ and applying Cramer's rule to this matrix, it is easy to see that the signed maximal minors of $\widehat{N}$ are $a_1,\ldots a_m$. The converse of the Hilbert-Burch Theorem \ref{hb} then proves the claim.
\end{proof}

\smallskip
\subsection{Relations among the bases}\label{rell}
In the following, we will show  that if one picks convenient unimodular matrices in the process of computing the several $R$-basis of $\mbox{Syz}(a_1,\ldots, a_m)$ described above, the ansatz is essentially the same.
We begin by proving the following straightforward relation between $U^*$ and $N^*$ if $M$ is already unimodular.
\begin{proposition}\label{antrior}
Suppose that  $M\cdot N={\bf I}_2$ (in particular, we have that $M$ is unimodular). Then, one can find  a unimodular $m\times m$ matrix which can be used as $U^*$  in \eqref{MQS} and also as $N^{**}$ from \S \ref{22}.
 Moreover, we get $\widehat{U^*}=\widehat{N},$ i.e. the bases from Theorems \ref{smt} and \ref{NN} coincide.
\end{proposition}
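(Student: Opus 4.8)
The idea is to exhibit one explicit matrix $\mathcal{U}\in R^{m\times m}$ and check that it can serve simultaneously as $U^*$ in \eqref{MQS} and as the matrix $N^{**}$ produced by the procedure of \S\ref{22}. First I would observe that $M\cdot N={\bf I}_2$ forces $M$ to be unimodular: by the Cauchy--Binet formula, $1=\det(M\cdot N)$ lies in the ideal generated by the $2\times 2$ minors of $M$. Hence Theorem \ref{eqs} applies to $M$ and yields a unimodular $U^*\in R^{m\times m}$ with $M\cdot U^*=[{\bf I}_2,{\bf 0}]$; writing $U^{*1},\ldots, U^{*m}$ for its columns, this means $M\cdot U^{*1}=e_1$, $M\cdot U^{*2}=e_2$, and $M\cdot U^{*i}=0$ for $i\geq 3$, where $e_1,e_2$ denotes the standard basis of $R^2$. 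Rescaling one of the columns $U^{*3},\ldots, U^{*m}$ by the scalar $\det(U^*)^{-1}\in\K$ (when $m\geq 3$), I may also assume $\det(U^*)=1$, still with $M\cdot U^*=[{\bf I}_2,{\bf 0}]$. The structural point I would then use is that $U^{*3},\ldots, U^{*m}$ form an $R$-basis of $\ker M\subseteq R^m$: the columns of $U^*$ are an $R$-basis of $R^m$, and for $v=\sum_i c_i U^{*i}$ one has $M\cdot v=c_1 e_1+c_2 e_2$, so $v\in\ker M$ exactly when $c_1=c_2=0$.

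Next I would set $\mathcal{U}:=\big(N^{*1}\ N^{*2}\ U^{*3}\ \ldots\ U^{*m}\big)$, where $N^{*1},N^{*2}$ are the two columns of $N$, following the notation of \S\ref{22}. Since $M\cdot N={\bf I}_2$ gives $M\cdot N^{*j}=e_j$, the difference $N^{*j}-U^{*j}$ lies in $\ker M$ for $j=1,2$, hence is an $R$-linear combination of $U^{*3},\ldots, U^{*m}$; collecting these relations exhibits $\mathcal{U}=U^*\cdot T$ with $T\in R^{m\times m}$ lower triangular and all diagonal entries equal to $1$, so $\det T=1$. Therefore $\mathcal{U}$ is unimodular with $\det(\mathcal{U})=\det(U^*)=1$. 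It qualifies as $U^*$ in \eqref{MQS} because $M\cdot\mathcal{U}=\big(M N^{*1}\ M N^{*2}\ M U^{*3}\ \ldots\big)=[{\bf I}_2,{\bf 0}]$, and it is a legitimate choice of $N^*$ in \S\ref{22} since $\det(\mathcal{U})=1$ and its first two columns are those of $N$. Finally I would check that the column reduction in \S\ref{22} acts trivially on this choice: for $i\geq 3$ the $i$-th column of $\mathcal{U}$ is $U^{*i}\in\ker M$, so $(a_1\ \ldots\ a_m)\cdot U^{*i}=(p\ q)\cdot M\cdot U^{*i}=0$, and one may take $\lambda_i=\delta_i=0$; the procedure then returns $N^{**}=\mathcal{U}$. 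As the matrix used as $U^*$ and the matrix $N^{**}$ are now literally the same $\mathcal{U}$, and \eqref{whu} and \eqref{N} are the same expression in its columns, we conclude $\widehat{U^*}=\widehat{N}$.

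The only delicate point, and the step I expect to be the mild obstacle, is that the matrix coming straight out of Theorem \ref{eqs} need not have the columns of $N$ as its first two columns --- they agree only modulo $\ker M$ --- so one cannot simply take $U^*=N^{**}$; the correction $\mathcal{U}=U^*\cdot T$ with $T$ unit lower triangular is precisely what reconciles the two constructions while keeping $\det=1$ and the identity $M\cdot\mathcal{U}=[{\bf I}_2,{\bf 0}]$ intact. Everything else is a direct substitution. (For $m=2$ the statement is immediate, since then $M$ is invertible with $N=M^{-1}$ and all of $U^*$, $N^{**}$ and $N$ coincide.)
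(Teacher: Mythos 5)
Your proof is correct and follows essentially the same route as the paper: take a matrix from Theorem \ref{eqs}, use that its last $m-2$ columns form an $R$-basis of $\ker M$ to replace the first two columns by those of $N$ (your unit lower-triangular factor $T$ is exactly the paper's column operations), and then observe that the reduction step of \S\ref{22} is trivial so that $N^{**}$ coincides with the modified $U^*$ and hence $\widehat{U^*}=\widehat{N}$. The only additions are that you spell out the determinant normalization and justify the kernel-basis claim, which the paper merely asserts.
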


\begin{proof}
Start by picking any $U_0^*\in R^{m\times m}$ satisfying \eqref{MQS}. Denote its columns with $U_{0}^{*1}\ldots U_{0}^{*m}.$  As we have
$$M\cdot \big( U_{0}^{*1}\ U_{0}^{*2}\big)=M\cdot N=\begin{pmatrix} 1&0\\ 0&1\end{pmatrix},$$
we deduce that  each of the columns of $\big( U_{0}^{*1}\ U_{0}^{*2}\big)-N$ belongs to the right-kernel of $M.$  Denote also with $N^1$ and $N^2$ the columns of $N$. As $M$ is unimodular, its  right-kernel is a free $R$-module, with basis $U_{0}^{*3},\ldots, U_{0}^{*m}.$ So there exist $y_{i3},\ldots, y_{im}\in R$ such that
$$U_{0}^{*i}-N^i=y_{i3} U_{0}^{*3}+\ldots + y_{im} U_{0}^{*m}, \ i=1,  2$$
Set now $U^*$ to be the matrix obtained from $U_0^*$ by substracting to the column $i$ the following linear combination of columns:
$y_{i3}U_{0}^{*3}+\ldots+y_{im}U_{0}^{*m},\, i=1, 2.$  Clearly  $U^*$ is unimodular, and has $N$ as its first two columns, so the first part of the claim follows.

To see the second part, note that  as we have
$$(a_1\ldots a_n)\cdot N^*=(p \ q)\cdot M\cdot U^*=(p \ q \ 0 \ldots 0),
$$
we deduce that $\lambda_i,\,\delta_i,\,i=3,\ldots, m$ from \S \ref{22} are all equal to zero, and hence the matrix $N^{**}$ defined in that section equals to $N^*$ (which is equal to $U^*$). As the process to convert $N^{**}$ into $\widehat{N}$ and $U^*$ into $\widehat{U^*}$ is the same (replace the first two columns with $q$ times the first column minus $p$ times the second column), we deduce straigthforwardly that $\widehat{U^*}=\widehat{N},$ which concludes with the proof of the proposition.
\end{proof}

If $M$ is not unimodular anymore, we will have to work with matrices $\tilde{M}$ and $\tilde{N}$ which were defined in  \eqref{tildem} and \eqref{tilden} respectively.  The following result then holds.
\begin{proposition}
The matrices $U$ and $N^*$ from  \eqref{QS} and \S \ref{22} respectively can be chosen in such a way that the latter is a submatrix of the first, and that
 $\widehat{U^*}=\widehat{N},$ i.e. the bases from Theorems \ref{smt} and \ref{NN} coincide.
\end{proposition}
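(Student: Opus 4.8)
The plan is to construct the matrix $U$ of \eqref{QS} directly out of a conveniently normalised $N^*$, so that $N^*$ becomes literally the top left $m\times m$ block of $U$. First I would normalise $N^*$: starting from any matrix of determinant $1$ whose first two columns are the columns $N^1,N^2$ of $N$, the column operations of \S\ref{22} that produce $N^{**}$ change neither the determinant nor the first two columns, so $N^{**}$ is again an admissible choice; hence we may assume from the outset that $N^*=N^{**}$, i.e. that $(a_1\ \ldots\ a_m)\cdot N^{*j}=0$ for $j=3,\ldots,m$. Next I would collect a few identities. Writing $K^1,K^2$ for the columns of $K$ and $v:=\begin{pmatrix}-q\\ p\end{pmatrix}$ for the last column of $\tilde M$ (see \eqref{tildem}), Lemma \ref{hunno} and the explicit form \eqref{kristina} of $K$ give $\tilde M\cdot\begin{pmatrix}N^1\\ 0\end{pmatrix}=K^1$, $\tilde M\cdot\begin{pmatrix}N^2\\ 0\end{pmatrix}=K^2$, $\tilde M$ times the last column of ${\bf I}_{m+1}$ equals $v$, and $q\,K^1-p\,K^2=-\det(K)\,v$. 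Moreover $(p\ q)\cdot(M N^{*j})=(a_1\ \ldots\ a_m)\cdot N^{*j}=0$ for $j\ge 3$, and since $\gcd(p,q)=1$ forces $\ker\bigl((p\ q)\colon R^2\to R\bigr)=R\cdot v$, for each such $j$ there is $\nu_j\in R$ with $M N^{*j}=\nu_j\,v$.

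With these identities I would set $\widehat{N^*}:=\begin{pmatrix}N^*&{\bf 0}\\ {\bf 0}&1\end{pmatrix}\in R^{(m+1)\times(m+1)}$, which has determinant $1$, and right-multiply it by a product $E$ of elementary (hence determinant one) matrices designed to turn $\tilde M\cdot\widehat{N^*}=[\,K^1\ K^2\ \nu_3 v\ \cdots\ \nu_m v\ v\,]$ into $[{\bf I}_2,{\bf 0}]$: for $j=3,\ldots,m$ subtract $\nu_j$ times the last column from the $j$th column; then subtract $e$ times the last column from the first and add $f$ times the last column to the second; and finally add $q$ times the first column and $-p$ times the second column to the last column. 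Using the identities above one checks that these operations indeed carry $\tilde M\cdot\widehat{N^*}$ to $[{\bf I}_2,{\bf 0}]$, so that $U:=\widehat{N^*}E$ is an admissible choice in \eqref{QS} with $\det U=1$; and carrying out the very same operations on the columns of $\widehat{N^*}$ itself shows that the columns of $U$ are $\begin{pmatrix}N^1\\ -e\end{pmatrix},\ \begin{pmatrix}N^2\\ f\end{pmatrix},\ \begin{pmatrix}N^{*3}\\ -\nu_3\end{pmatrix},\ \ldots,\ \begin{pmatrix}N^{*m}\\ -\nu_m\end{pmatrix},\ \begin{pmatrix}qN^1-pN^2\\ \det K\end{pmatrix}$, in each case an $m$-vector stacked over a scalar. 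In particular the submatrix of $U$ on the first $m$ rows and the first $m$ columns is exactly $N^*$, which is the first assertion.

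For the second assertion I would simply compare column lists. Deleting the first two columns and the last row of $U$ leaves the $m\times(m-1)$ matrix whose columns are $N^{*3},\ldots,N^{*m},\,qN^1-pN^2$; this is the matrix $\widehat{U}$ of Theorem \ref{mt}. On the other hand, because we arranged $N^*=N^{**}$, formula \eqref{N} reads $\widehat{N}=\bigl(qN^1-pN^2\ \ N^{*3}\ \cdots\ N^{*m}\bigr)$, so $\widehat U$ and $\widehat N$ have exactly the same columns; hence the basis of $\mbox{Syz}(a_1,\ldots,a_m)$ produced by Theorem \ref{mt} (via $\tilde M$) coincides with the one produced by Theorem \ref{NN} (via $N$). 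If one wants literal matrix equality (as in Proposition \ref{antrior}), it suffices to list the syzygy $qN^1-pN^2$ as the third column of $U$ rather than as the last, which is still admissible in \eqref{QS} after possibly rescaling one column to keep the determinant equal to $1$. I expect the main obstacle to be purely a matter of bookkeeping: because $\tilde M$ has $m+1$ columns while $N^*$ has only $m$, one must keep track separately of the extra column $v$ of $\tilde M$, of the extra row and column of $\tilde N$ in \eqref{tilden}, and of the ``extra'' syzygy $qN^1-pN^2$, and line them all up correctly; once that is arranged, each verification above is an elementary computation.
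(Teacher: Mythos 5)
Your proposal is correct, but it runs the construction in the opposite direction from the paper, so a comparison is worthwhile. The paper writes down explicitly the $(m+1)\times 3$ matrix $\tilde N^*$ of \eqref{tilden*}, whose three columns are precisely your columns $1$, $2$ and last, namely $N^1,\,N^2,\,qN^1-pN^2$ stacked over $-e,\,f,\,1-eq-fp$; it checks $\tilde M\cdot\tilde N^*=\bigl({\bf I}_2\ {\bf 0}\bigr)$ via $\tilde M\tilde N=\tilde K$, then \emph{extends} this unimodular block to an invertible $U$ satisfying \eqref{QS} (the completion step is left implicit) and finally \emph{reads off} $N^*$ from the remaining columns of $U$. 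You instead start from an arbitrary admissible $N^*$, normalised so that $N^*=N^{**}$, and build $U$ around it as $(N^*\oplus 1)\cdot E$ with $E$ an explicit product of elementary matrices determined by $e,f$ from Lemma \ref{hunno} and the multipliers $\nu_j$ coming from $MN^{*j}=\nu_j\begin{pmatrix}-q\\ p\end{pmatrix}$ (which indeed uses $\gcd(p,q)=1$, as the paper assumes). What your route buys is that it makes the paper's implicit matrix-completion explicit and shows that \emph{every} normalised $N^*$ admits a compatible $U$ containing it as the top left $m\times m$ block; what it costs is only the column order: your $\widehat U$ lists $qN^1-pN^2$ last while \eqref{N} lists it first, so the bases coincide up to reordering, and literal equality is recovered by placing that column third in $U$ as the paper does. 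The determinant caveat in your last step is harmless: \eqref{QS} only requires $U$ unimodular, i.e.\ $\det U\in\K^*$ (the normalisation $\det U=1$ is a convenience, and Theorem \ref{mt} is insensitive to the unit), so no rescaling is actually needed. Finally, you correctly read the statement's ``$\widehat{U^*}$'' as the matrix $\widehat U$ of Theorem \ref{mt} (the $\tilde M$-construction), which is also what the paper's own proof compares with $\widehat N$ from Theorem \ref{NN}.
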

Note that we are not having any requirement about the result of the product between $\tilde{M}$ and $\tilde{N}$ as it may be hinted by the situation in Proposition \ref{antrior}.

\begin{proof}
Denote the columns of $N$ with $N^1,\,N^2,$ and set

\begin{equation}\label{tilden*}
{\tilde N}^*=\begin{pmatrix}
N^1& N^2& qN^1-pN^2\\
-e& f &1-eq-fp
\end{pmatrix}\in R^{(m+1)\times 3},
\end{equation}
with $e,f\in R$ as in \eqref{hunno}.
As $\tilde{M}\cdot\tilde{N}=\tilde{K},$ with $\tilde{K}$ defined in \eqref{tildeK}, we deduce that
$$\tilde{M}\cdot\tilde{N}^*=\begin{pmatrix} 1&0&0\\ 0&1&0\end{pmatrix}.$$
This implies that $\tilde{N}^*$ is unimodular, and hence can be extended to an invertible $U\in R^{(m+1)\times (m+1)}$ such that \eqref{QS} holds and having $\tilde{N}^*$ as its first columns. We have then that
$$U=\begin{pmatrix}
N^1& N^2& qN^1-pN^2 & N^4&\ldots &N_{m+1}\\
-e& f &1-eq-fp & r_4&\ldots &r_{m+1}
\end{pmatrix}
$$
for suitable columns $N^4,\ldots, N^{m+1}$ in $R^{m\times 1}$ and suitable $r_4,\ldots, r_{m+1}\in R.$ As by performing elementary operations in the first columns of $U$ we can get $\begin{pmatrix} 0\\  \vdots\\ 0\\ 1\end{pmatrix},$ we deduce then that, by picking $N^*=(N^1 \ N^2\ N^4 \ldots N^{m+1})\in R^{m\times m},$
\begin{itemize}
\item $N^*$ is unimodular and  has as first two columns $N^1$ and $N^2,$ i.e. it is a valid $N^*$ in the sense of the algorithm described in \S \ref{22}.
\item Following the steps of that algorithm, $N^{**}=N^*.$
\item $\widehat{N}$ from \eqref{N} equals to  $ \big(qN^1-pN^2 \ N^4\ldots N^{m+1}\big)=\widehat{U}$ from Theorem \ref{mt}.
\end{itemize}
This concludes with the proof of the Proposition.
\end{proof}

\bigskip
\section{Degree bounds and proof of the main theorem}\label{4}
In this section, we will apply our explicit bound given in Theorem \ref{explicitqs} to give bounds for $\deg(\widehat{U})$ from Theorem \ref{mt}, $\deg(\widehat{U^*})$ from Theorem \ref{smt}, and $\deg(\widehat{N})$ from Theorem \ref{NN}. These bounds are given in terms of the degrees of both the input polynomials, but also of the transition matrices. We will show also some relations among the input bounds, and conclude by proving Theorem \ref{mtt}  from the Introduction.

Recall that we have $\delta_0$ being a bound for the degrees of $p$ and $q,$ and $\delta_a$ being a bound on the degrees of $a_1,\ldots, a_m.$ In addition, we set
 $\delta_N$ (resp. $\delta_M$) being  a bound for the degrees of the elements in $N$ (resp. $M$).

 \begin{proposition}\label{41}
 With notation as above, if $\delta\geq\max\{\delta_M,\,\delta_0\},$ we have that
 $$\deg(\widehat{U})\leq 3n^24^n(\delta+1)^{2n}.
 $$
 \end{proposition}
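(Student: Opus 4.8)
The plan is to trace the degrees through the construction of $\widehat U$ in Theorem \ref{mt} and bound each step. Recall that $\widehat U$ is obtained from the unimodular matrix $U\in R^{(m+1)\times(m+1)}$ produced by applying the Effective Quillen--Suslin Theorem (in its explicit form, Theorem \ref{explicitqs}) to $\tilde M\in R^{2\times(m+1)}$, where $\tilde M$ is $M$ augmented with the column $\begin{pmatrix}-q\\ p\end{pmatrix}$ as in \eqref{tildem}. So first I would record that $\deg(\tilde M)\le\max\{\deg(M),\deg(p),\deg(q)\}\le\max\{\delta_M,\delta_0\}\le\delta$.

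Next I would invoke Theorem \ref{explicitqs} applied to the unimodular matrix $\tilde M$: here the parameters are $r=2$, $s=m+1$, and the "degree of the input" $d$ can be taken to be $\delta$ (the degree of $\tilde M$). The theorem gives a unimodular $U$ with $\deg(U)\le 3n^2\bigl(r(d+1)\bigr)^{2n}=3n^2\bigl(2(\delta+1)\bigr)^{2n}=3n^2\cdot 2^{2n}(\delta+1)^{2n}=3n^2\,4^n(\delta+1)^{2n}$, which is exactly the claimed bound. Then I would observe that $\widehat U$ is a submatrix of $U$ (obtained by deleting the first two columns and the last row — see the paragraph preceding Theorem \ref{mt}), hence $\deg(\widehat U)\le\deg(U)\le 3n^2\,4^n(\delta+1)^{2n}$, completing the argument.

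The one subtlety I would flag — and this is really the only potential obstacle — is the preliminary normalization step inside the Effective Quillen--Suslin algorithm, where one performs a linear change of coordinates and multiplies $\tilde M$ by the auxiliary matrix $A$ from Section \ref{2}, in order to satisfy Assumption 2.8 of \cite{CCDHKS93}. As noted there, this "may increase the value of $d$", so strictly speaking one should check that after this normalization the input degree is still controlled by a quantity whose contribution is absorbed into the stated bound. In fact the change of variables does not increase the total degree, and multiplication by $A$ raises the degree by at most $1$ (it multiplies some entries by $x_n$), so one may safely replace $d$ by $\delta+1$ throughout; carrying the extra $+1$ through the estimate of Theorem \ref{explicitqs} still lands inside $3n^2\,4^n(\delta+1)^{2n}$ because that bound already contains the slack $(d+1)$ rather than $d$. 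Apart from bookkeeping this normalization, the proof is a direct substitution into Theorem \ref{explicitqs}.
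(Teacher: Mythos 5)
Your proposal is correct and follows essentially the same route as the paper: apply Theorem \ref{explicitqs} with $r=2$ to the unimodular matrix $\tilde M$, whose degree is bounded by $\delta$, and then use that $\widehat{U}$ is a submatrix of the resulting $U$. Your remark about the preliminary normalization is fine but already accounted for in the statement of Theorem \ref{explicitqs}, whose bound uses $d+1$ precisely to absorb that degree increase.
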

 \begin{proof}
 Note that $\widehat{U}$ is a submatrix of the unimodular matrix $U$. The result is then consequence of Theorem \ref{explicitqs} with $r=2.$
 \end{proof}

 \begin{proposition}\label{41b}
 With notation as above, if  $M$ is unimodular, we have that
 $$\deg(\widehat{U^*})\leq 3n^24^n(\delta_M+1)^{2n}+\delta_0.
 $$
 \end{proposition}
 \begin{proof}
 By applying Theorem \ref{explicitqs} to the unimodular matrix $M$ we obtain $U^*$ as in \eqref{MQS} with
 $$\deg(U^*)\leq 3n^24^n(\delta_M+1)^{2n}.$$
To get $\widehat{U^*}$ we only have to modify the first column of $U^*$ and multiply a couple of columns of the latter by $-q$ and $p$. From here, the claim follows straigthforwardly.
 \end{proof}

 Interestingly, the computation of a basis of syzygies starting from $N$ will give us degree bounds which can differentiate the degrees of the different actors involved. The drawback is that this bound depends also on $m$, the number of elements of the sequence. Sometimes this leads to lower bounds as in the case of parametric curves, see Section \ref{final}.

 \begin{proposition}\label{42}
 With notation as above, we have that
 $$\deg(\widehat{N})\leq\delta_0+\delta_N+\delta_M+3mn^24^n(\delta_N+1)^{2n}.
 $$
 \end{proposition}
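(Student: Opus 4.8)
The plan is to trace through the construction of $\widehat N$ described in \S\ref{22}, keeping track of degrees at each elementary step. First I would apply Theorem \ref{explicitqs} to the transpose $N^{\bf t}$, which is unimodular of size $2\times m$ by Proposition \ref{Nu} and has degree at most $\delta_N$. The theorem produces a unimodular $U\in R^{m\times m}$ with $\deg(U)\le 3mn^24^n(\delta_N+1)^{2n}$ — note the extra factor of $m$ here compared with Propositions \ref{41}, \ref{41b}, because the role of $s$ in the bound of Theorem \ref{eqs}/\ref{explicitqs} is played by $m$ rather than by a constant, and this is exactly the source of the $m$-dependence advertised before the statement. We then set $N^*:=U^{-1}$ (or rather take $N^*$ to be the matrix whose first two columns are $N^1,N^2$, as in \S\ref{22}); its degree is controlled by the same quantity. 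At this stage the first two columns of $N^*$ are literally the columns of $N$, so they have degree at most $\delta_N$, while the remaining columns have degree at most $3mn^24^n(\delta_N+1)^{2n}$.

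Next I would bound the degrees of the scalars $\lambda_i,\delta_i\in R$ appearing in $(a_1\ldots a_m)\cdot N^{*i}=\lambda_i p+\delta_i q$ for $i=3,\ldots,m$. Since $(p\ q)\cdot M=(a_1\ldots a_m)$ with $\deg(M)\le\delta_M$, one has $(a_1\ldots a_m)\cdot N^{*i}=(p\ q)\cdot(M\cdot N^{*i})$, so one may simply take $(\lambda_i,\delta_i)$ to be the entries of the column vector $M\cdot N^{*i}$, whose degree is at most $\delta_M+\deg(N^{*i})\le\delta_M+3mn^24^n(\delta_N+1)^{2n}$. The column operation $N^{*i}\mapsto N^{*i}-\lambda_iN^{*1}-\delta_iN^{*2}$ producing $N^{**}$ therefore yields columns of degree at most $\max\{\deg(N^{*i}),\ \deg(\lambda_i)+\delta_N,\ \deg(\delta_i)+\delta_N\}$, and the binding term is $\delta_M+\delta_N+3mn^24^n(\delta_N+1)^{2n}$. (The first two columns of $N^{**}$ are still $N^1,N^2$ of degree $\le\delta_N$.)

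Finally, $\widehat N$ is obtained from $N^{**}$ by replacing its first two columns with the single column $qN^{**1}-pN^{**2}$ (degree $\le\delta_0+\delta_N$) and keeping columns $N^{**3},\ldots,N^{**m}$. Taking the maximum over all these degrees,
$$
\deg(\widehat N)\le\max\{\delta_0+\delta_N,\ \delta_M+\delta_N+3mn^24^n(\delta_N+1)^{2n}\}\le\delta_0+\delta_N+\delta_M+3mn^24^n(\delta_N+1)^{2n},
$$
which is the claimed bound. I do not expect any genuine obstacle here: the only point requiring a little care is making the right bookkeeping choice for $\lambda_i,\delta_i$ — pulling them out of $M\cdot N^{*i}$ rather than invoking an abstract membership in $\langle p,q\rangle$ — so that their degrees are controlled by $\delta_M$ plus the degree of $N^*$, and confirming that the factor of $m$ in $\deg(U)$ comes out correctly from Theorem \ref{explicitqs} applied with $r=2$ and $s=m$.
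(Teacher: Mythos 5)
Your overall route is the same as the paper's (trace the construction of \S\ref{22} step by step, take $(\lambda_i\ \delta_i)^{\bf t}=M\cdot N^{*i}$, and bound the final column operations), and your treatment of the $\lambda_i,\delta_i$ and of the last step $qN^{**1}-pN^{**2}$ is fine. But there is a genuine flaw in the one place where the proposition's characteristic factor $m$ has to be accounted for. You claim that Theorem \ref{explicitqs} applied to $N^{\bf t}$ (with $r=2$, $s=m$, $d=\delta_N$) yields $\deg(U)\le 3mn^24^n(\delta_N+1)^{2n}$ ``because the role of $s$ is played by $m$''. That is not what the theorem says: the degree bound in Theorem \ref{eqs}(3) is $(rd)^{\cO(n)}$ and the explicit version gives $\deg(U)\le 3n^2\big(r(d+1)\big)^{2n}=3n^24^n(\delta_N+1)^{2n}$, with no dependence on $s$ at all (the size $s$ only enters the count of elementary factors in item (4)). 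Your inequality for $\deg(U)$ is therefore true but only because it is weaker than the actual bound, and your stated reason for the factor $m$ is wrong.

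The real source of the $m$ is the next step, which you pass over: $N^*$ is (up to the transpose bookkeeping of \S\ref{22}) the \emph{inverse} of $U$, and you assert that ``its degree is controlled by the same quantity'' with no argument. In general the inverse of a polynomial matrix does not obey the same degree bound as the matrix itself; what one uses is that $\det(U)\in\K^{\times}$, so the entries of $U^{-1}$ are, up to constants, $(m-1)\times(m-1)$ minors of $U$, whence $\deg(N^*)\le(m-1)\deg(U)\le 3mn^24^n(\delta_N+1)^{2n}$. Note that if you instead combined your own (inflated) bound on $\deg(U)$ with this necessary cofactor step, you would get an extra factor of $m$ and overshoot the claimed bound; the proof only closes if the $m$-free bound $3n^24^n(\delta_N+1)^{2n}$ is used for $U$ and the factor $m-1$ is paid exactly once, at the inversion. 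With that correction, the rest of your bookkeeping (columns $N^{*1}=N^1$, $N^{*2}=N^2$ of degree $\le\delta_N$, $\deg(\lambda_i),\deg(\delta_i)\le\delta_M+\deg(N^{*i})$, and the final $\delta_0+\delta_N$ from the combined first column) gives the stated inequality, essentially as in the paper.
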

 \begin{proof}
 By applying the Effective Quillen-Suslin procedure to the unimodular matrix $N,$ we deduce that the matrix $U$ of Theorem \ref{eqs} has degree bounded by $3n^24^n(\delta_N+1)^{2n},$ thanks to Theorem \ref{explicitqs} with $r=2.$

 The matrix $N^*$ from \S \ref{22} is then the inverse of $U,$ and can be computed by using cofactors. Hence, its entries have degree bounded by
 $$3mn^24^n(\delta_N+1)^{2n}.
 $$
 As we have $(p\ q)\cdot M=(a_1\ a_2\ldots a_m)$ from \eqref{MyN}, we get
 $$(a_1\ldots a_m)\cdot {N^*}^i=\lambda_ip+\delta_iq=(p\ q)\cdot M\cdot {N^*}^i,$$
 and deduce then that $\deg(\lambda_i),\,\deg(\delta_i)\leq\delta_N+\delta_M$ for all $i=3,\ldots, m.$ By construction, we have that
 $$\deg(N^{**})\leq\delta_N+\delta_M+3mn^24^n(\delta_N+1)^{2n}.
 $$
 To pass from $N^{**}$ to $\widehat{N},$ the matrix encoding a basis of $\mbox{Syz}(a_1,\ldots, a_m),$ we only need to modify the first column of $N^{**},$ and we have
 $$\deg(\widehat{N})\leq\delta_0+\delta_N+\delta_M+3mn^24^n(\delta_N+1)^{2n},
 $$
 as claimed.
 \end{proof}

 \smallskip
 \subsection{From $\tilde{M}$ to an unimodular $M$}
The bounds on the degrees of the bases computed via $\tilde{M}$ or $M$ if the latter is unimodular are not very comparable, for instance they depend on whether $\delta_0>\delta_M$ or viceversa. We will compute for completion of our study of the degrees appearing in these matrices a bound on the unimodular matrix $M'$ defined in \eqref{Mp} in terms of the input degrees of the problem.
\begin{proposition}
With notation as above, a matrix $M'$ as in \eqref{Mp} can be computed with
$$\deg(M')\leq 3n^24^n(\delta_N+1)^{2n}+\delta_M+\delta_N+\delta_0.
$$
\end{proposition}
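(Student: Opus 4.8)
The statement to prove is a degree bound for the unimodular matrix $M'$ constructed in the proof of Proposition \ref{Mun}, namely
$$\deg(M')\leq 3n^24^n(\delta_N+1)^{2n}+\delta_M+\delta_N+\delta_0.$$
Recall from \eqref{Mp} that $M'=M+\left(\begin{smallmatrix} x_1q&\ldots&x_mq\\ -x_1p&\ldots&-x_mp\end{smallmatrix}\right)$, where $x_1,\ldots,x_m\in R$ are chosen to satisfy the vector identity \eqref{oss}: $\sum_i x_i(b_i\ c_i)=(e\ -f)$, with $e,f$ as in Lemma \ref{hunno}. So the whole task reduces to bounding $\deg(x_i)$ and combining it with $\delta_0$ (from the factors $p,q$) and $\delta_M$ (from the $M$ summand), each degree adding to $\deg(M')$ via a product or a sum.

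First I would make explicit where the $x_i$ come from. The identity \eqref{oss} says that $(e\ -f)$ lies in the row span of $N$. Since $N$ is unimodular (Proposition \ref{Nu}), applying the Effective Quillen-Suslin Theorem to $N^{\bf t}$ (equivalently, working as in \S\ref{22}) produces a unimodular $U\in R^{m\times m}$ with $\deg(U)\leq 3n^24^n(\delta_N+1)^{2n}$ by Theorem \ref{explicitqs} with $r=2$, such that $N^{\bf t}\cdot U=[{\bf I}_2,{\bf 0}]$; then $N^* := (U^{\bf t})^{-1}$, or rather a right inverse of the $m\times 2$ matrix $N$, gives an explicit way to write any element of the row span of $N$ as a combination of the rows. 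Concretely, a left inverse $P\in R^{2\times m}$ of $N$ (so $N^{\bf t}P^{\bf t}$-type relation, $P\cdot N = {\bf I}_2$) can be read off from $U$ with $\deg(P)\leq 3n^24^n(\delta_N+1)^{2n}$, and then $(x_1\ \ldots\ x_m) = (e\ -f)\cdot P$ solves \eqref{oss}. This gives
$$\deg(x_i)\leq \deg(e,f)+3n^24^n(\delta_N+1)^{2n}.$$

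Next I need a bound on $\deg(e),\deg(f)$. These come from Lemma \ref{hunno}: $K=M\cdot N$ and $e\cdot p = \gamma$, $f\cdot q = \beta$ where $\gamma,\beta$ are entries of $K=M\cdot N$, so $\deg(ep),\deg(fq)\leq \delta_M+\delta_N$, hence $\deg(e),\deg(f)\leq \delta_M+\delta_N$ (as $e,f\in R$ and we only need an upper bound — dividing by $p$ resp. $q$ only decreases the degree, or one argues directly). Plugging in,
$$\deg(x_i)\leq \delta_M+\delta_N+3n^24^n(\delta_N+1)^{2n}.$$
Then $\deg(x_iq),\deg(x_ip)\leq \delta_0+\delta_M+\delta_N+3n^24^n(\delta_N+1)^{2n}$, and finally $\deg(M')\leq\max\{\delta_M,\ \delta_0+\delta_M+\delta_N+3n^24^n(\delta_N+1)^{2n}\}=3n^24^n(\delta_N+1)^{2n}+\delta_M+\delta_N+\delta_0$, which is the claim.

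**Main obstacle.** The only genuinely non-routine point is making precise the passage from "$(e\ -f)$ is in the row span of the unimodular $N$" to an explicit combination with controlled degrees: one must be careful that the effective version of Quillen-Suslin applied to $N^{\bf t}$ really yields a left inverse of $N$ (not merely a completion to an invertible matrix) of the stated degree, and that multiplying the $1\times 2$ vector $(e\ -f)$ by it does not inflate degrees beyond the additive $\deg(e,f)$ term. Everything else — the degree of $e,f$ from $K=MN$, and the final combination of $\delta_0,\delta_M,\delta_N$ — is bookkeeping of the same flavour as Propositions \ref{41}–\ref{42}. I would write the argument so that it parallels the proof of Proposition \ref{42}, since the unimodular matrix and its inverse/left-inverse appear there in exactly the same way.
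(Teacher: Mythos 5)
Your argument is correct and follows essentially the same route as the paper: the paper also bounds $\deg(e),\deg(f)\leq\delta_M+\delta_N$ from $K=M\cdot N$, applies Theorem \ref{explicitqs} to $N^{\bf t}$ to get $U_N$ with $N^{\bf t}\cdot U_N=({\bf I}_2\ {\bf 0})$ and $\deg(U_N)\leq 3n^24^n(\delta_N+1)^{2n}$, and takes the $x_i$ as the $e,f$-combination of the first two columns of $U_N$ (your left inverse $P$ is precisely the transpose of those two columns, so your $(e\ -f)\cdot P$ is the same construction, with the sign handled correctly). The final bookkeeping with $\delta_0$, $\delta_M$, $\delta_N$ matches the paper's conclusion exactly.
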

 \begin{proof}
 Note that $e$ and $f$ from \eqref{kristina} have degrees bounded by $\delta_M+\delta_N$. Let $U_N\in R^{m\times m}$ the unimodular matrix such that
 $N^{\bf t}\cdot U_N=({\bf I}_2\, {\bf0})\in R^{2\times m}.$ By Theorem \ref{explicitqs}, $U_N$ can be computed with $\deg(U_N)\leq 3n^24^n(\delta_N+1)^{2n}.$
As usual, denote the columns of $U_N$ with $U_N^1,\ldots U_N^m.$ If we set,
$$\begin{pmatrix} x_1\\ x_2\\ \vdots\\ x_m\end{pmatrix}:=eU_N^1+fU_N^2$$
we deduce that \eqref{oss} is satisfied. By computing explicitly, we get $$\deg(x_i)\leq 3n^24^n(\delta_N+1)^{2n}+\delta_M+\delta_N.$$
To compute $M'$ from $M$ as in \eqref{Mp}, we have to add to the latter a matrix with coefficients then bounded by $3n^24^n(\delta_N+1)^{2n}+\delta_M+\delta_N+\delta_0$. From here, the claim follows straightforwardly, as this bound is larger than $\delta_M,$ which bounds the degree of $M$.
 \end{proof}

\smallskip
\subsection{Relations among bounds}
So far, we have
\begin{itemize}
\item $\delta_M$, a bound on the degree of the elements of $M$;
\item $\delta_N$, a bound on the degree of the elements of $N$;
\item $\delta_0$, a bound on the degrees of $p$ and $q;$
\item $\delta_a,$ a bound on the degrees of $a_1,\ldots, a_m.$
\end{itemize}

Is there any relation among these bounds? Clearly, from \eqref{MyN} we deduce straightforwardly that
\begin{itemize}
\item given $\delta_a$ and $\delta_N$, one can set $\delta_0:=\delta_a+\delta_N$;
\item given $\delta_0$ and $\delta_M$, one can set $\delta_a:=\delta_0+\delta_M.$
\end{itemize}

The following relation is less subtle.
\begin{proposition}\label{43}
Given $\delta_0$ and $\delta_a$, there exists a matrix $M$ such that one can take $\delta_M:=\delta_0^2+\delta_a$
\end{proposition}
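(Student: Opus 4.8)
The plan is to produce, for each $j$, polynomials $d_j,e_j\in R$ with $p\,d_j+q\,e_j=a_j$ and $\deg d_j,\deg e_j\le\delta_0^2+\delta_a$, and then take $M$ to be the $2\times m$ matrix whose $j$-th column is $\binom{d_j}{e_j}$. We may assume $\delta_0=\max\{\deg p,\deg q\}$ (a larger bound only enlarges the target $\delta_0^2+\delta_a$) and, by symmetry of the roles of $p$ and $q$, that $d_p:=\deg p\le d_q:=\deg q=\delta_0$. A generic linear change of the variables (possible since $\K$ is infinite) preserves all degrees and the equality $\langle a_1,\dots,a_m\rangle=\langle p,q\rangle$, so we may further assume $p$ and $q$ are monic in $x_n$ with $\deg_{x_n}p=d_p$ and $\deg_{x_n}q=d_q$. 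Since $\gcd(p,q)=1$ is preserved, Gauss's lemma gives that $p,q$ are coprime in $\operatorname{Frac}(S)[x_n]$, where $S:=\K[x_1,\dots,x_{n-1}]$, hence $\rho:=\operatorname{Res}_{x_n}(p,q)\in S$ is nonzero, with $\deg\rho\le d_pd_q$; at the end one transports the $d_j,e_j$ back through the (degree-preserving) inverse substitution.

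First I would perform Euclidean division of each $a_j$ by $p$ in the variable $x_n$: $a_j=g_jp+r_j$ with $\deg_{x_n}r_j<d_p$. Because $p$ is monic in $x_n$ of total degree $d_p$, this division keeps total degrees under control, $\deg r_j\le\delta_a$ and $\deg g_j\le\delta_a-d_p$, and $r_j\in\langle p,q\rangle$. The next step is to resolve membership of $r_j$ inside the free $S$-module $R/\langle p\rangle=\bigoplus_{k=0}^{d_p-1}Sx_n^k$: there $\bar r_j$ lies in the $S$-submodule generated by $\bar q$, so $\bar r_j=Q\,\bar h_j$ for some $\bar h_j\in S^{d_p}$, where $Q\in S^{d_p\times d_p}$ is the matrix of multiplication by $\bar q$. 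The crucial observation is a ``staircase'' degree estimate: reduction modulo $p$ preserves the weighting in which $x_n^k$ has weight $k$, so the $(l,k)$ entry of $Q$ has degree at most $d_q+k-l$; consequently $\det Q$ has degree at most $\sum_l(d_q+\pi(l)-l)=d_pd_q$, and in fact $\det Q=\pm\rho$.

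Then I would invert via the adjugate: $\rho\,\bar h_j=\operatorname{adj}(Q)\,\bar r_j$, which holds as an identity of polynomial vectors precisely because $\bar h_j$ already exists in $S^{d_p}$. Tracking the same weighted degrees, the $(k,l)$ entry of $\operatorname{adj}(Q)$ has degree at most $(d_p-1)d_q+l-k$, while the $l$-th coordinate of $\bar r_j$ has degree at most $\delta_a-l$; combining and dividing by $\rho$ yields $\deg h_j\le(d_p-1)d_q+\delta_a-\deg\rho\le\delta_0^2+\delta_a$ for the lift $h_j:=\sum_k(\bar h_j)_kx_n^k$. Finally, $\bar r_j=\bar q\,\bar h_j$ in $R/\langle p\rangle$ means $r_j=q\,h_j+p\,l_j$ for some $l_j\in R$; bounding $\deg(p\,l_j)=\deg(r_j-q\,h_j)$ gives $\deg l_j\le d_pd_q+\delta_a-d_p\le\delta_0^2+\delta_a$. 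Hence $a_j=g_jp+r_j=(g_j+l_j)\,p+h_j\,q$ with both cofactors of degree $\le\delta_0^2+\delta_a$, which is what we wanted. (If instead $d_q<d_p$, one runs the symmetric argument, dividing by $q$ and working in $R/\langle q\rangle$.)

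The main obstacle is the degree bookkeeping in the adjugate step: a crude estimate on the minors of $Q$ coming from $\deg Q\le\delta_0$ only gives $2\delta_0^2+\delta_a$, off by a factor of two. Obtaining the sharp constant requires genuinely exploiting the weighted staircase structure of the multiplication matrix (equivalently, homogenizing and working with graded pieces), so that the telescoping sum $\sum_l(d_q+\pi(l)-l)=d_pd_q$ absorbs the permutation. A secondary point to treat carefully is the identification $\det Q=\pm\operatorname{Res}_{x_n}(p,q)$ together with its non-vanishing, which rests on coprimality being preserved under the generic linear change and on Gauss's lemma.
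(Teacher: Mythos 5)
Your proof is correct, but it takes a genuinely different route from the paper's. The paper settles the proposition in one line: since $\gcd(p,q)=1$, the pair $p,q$ forms an affine complete intersection, and the membership degree bound $\delta_a+\delta_0^2$ for writing each $a_j=d_j\,p+e_j\,q$ is quoted directly from Corollary 5.2 of \cite{DFGS91}; the columns $\binom{d_j}{e_j}$ then assemble the matrix $M$. You instead give a self-contained effective argument: after a generic linear change of coordinates making $p$ and $q$ monic in $x_n$ with $x_n$-degree equal to total degree, you divide $a_j$ by $p$, view $R/\langle p\rangle$ as a free module over $S=\K[x_1,\ldots,x_{n-1}]$, and solve $\bar r_j=Q\,\bar h_j$ with the multiplication-by-$q$ matrix $Q$ via the adjugate, exploiting the weighted (staircase) bound $\deg Q_{l,k}\le d_q+k-l$ so that the permutation indices telescope: $\deg\det Q\le d_pd_q$, $\deg\mathrm{adj}(Q)_{k,l}\le(d_p-1)d_q+l-k$, hence $\deg h_j\le(d_p-1)d_q+\delta_a$ and the $p$-cofactor has degree at most $d_pd_q-d_p+\delta_a$, both within $\delta_0^2+\delta_a$. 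I checked the bookkeeping (degree preservation of division by a polynomial monic in $x_n$ whose total degree equals its $x_n$-degree, the entry bounds, the adjugate step, and the cancellation of $\deg\rho$) and it is sound; note only that the identification $\det Q=\pm\mathrm{Res}_{x_n}(p,q)$ is a convenience, since all you need is $\det Q\neq0$, which already follows from injectivity of multiplication by $q$ on $\mathrm{Frac}(S)[x_n]/(p)$, i.e.\ from coprimality over the fraction field. As for what each approach buys: the paper's citation is shorter and rests on a general result valid for unmixed ideals in any codimension, whereas your argument is elementary, fully explicit (so it fits the algorithmic spirit of the paper and could be implemented alongside the Quillen--Suslin step), and in fact yields slightly sharper intermediate bounds, e.g.\ $(d_p-1)d_q+\delta_a-\deg\rho$, than the stated $\delta_0^2+\delta_a$.
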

\begin{proof}
As we are assuming $\gcd(p,q)=1,$ we get these polynomials are an affine complete intersection in $\K^n$. The result then follows straightforwardly from Corollary 5.2 in \cite{DFGS91}.
\end{proof}

The connection between $M$ and $N$ given via Theorem \ref{char} gives a bound for $\delta_N$ in terms of $\delta_M$ and $\delta_0$ but not a very optimal one.
\begin{proposition}
Given $\delta_M$ and $\delta_0,$ there exists a matrix $N$ such that one can take $\delta_N=3n^24^n(\max\{\delta_0,\,\delta_M\}+1)^{2n}.$
\end{proposition}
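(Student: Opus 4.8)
The plan is to avoid the given $M$ directly (which need not be unimodular) and instead work with the matrix $\tilde M$ from \eqref{tildem}, which by Proposition \ref{Nu} is \emph{always} unimodular and whose degree is bounded by $\max\{\delta_M,\delta_0\}$, since its first $m$ columns come from $M$ and its last column is $(-q\ \ p)^{\bf t}$. First I would apply the explicit Effective Quillen--Suslin bound of Theorem \ref{explicitqs} to $F:=\tilde M\in R^{2\times(m+1)}$ (the hypothesis $r\le s$ holds because $2\le m+1$ as $m\ge2$), with $r=2$ and $d:=\max\{\delta_M,\delta_0\}$. This produces a unimodular $U\in R^{(m+1)\times(m+1)}$ with $\tilde M\cdot U=[{\bf I}_2,{\bf 0}]$ and
$$\deg(U)\le 3n^2\big(2(d+1)\big)^{2n}=3n^24^n\big(\max\{\delta_0,\delta_M\}+1\big)^{2n}.$$

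Next I would read $N$ off from $U$: let $N\in R^{m\times 2}$ be the submatrix of $U$ formed by its first two columns and its first $m$ rows, so $\deg(N)\le\deg(U)$. To see that this $N$ satisfies the first identity in \eqref{MyN}, recall $(p\ q)\cdot M=(a_1\ldots a_m)$ and that the last column of $\tilde M$ is $(-q\ \ p)^{\bf t}$, whence $(p\ q)\cdot\tilde M=(a_1\ldots a_m\ 0)$. Multiplying on the right by the first two columns of $U$ and using $\tilde M\cdot U=[{\bf I}_2,{\bf 0}]$ yields $(p\ q)$ on one side and $(a_1\ldots a_m\ 0)$ times the first two columns of $U$ on the other; since the trailing coordinate is $0$, the last row of $U$ is irrelevant and we obtain exactly $(a_1\ldots a_m)\cdot N=(p\ q)$. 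Together with the unchanged $M$, the pair $(M,N)$ satisfies \eqref{MyN}, and the displayed estimate is precisely the claimed value of $\delta_N$.

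There is no substantial obstacle; the only points needing care are bookkeeping ones: confirming that $\deg(\tilde M)\le\max\{\delta_M,\delta_0\}$ so that $d$ can be taken as that maximum when invoking Theorem \ref{explicitqs}, checking the size hypothesis $r\le s$, and verifying that discarding the last row of $U$ when extracting $N$ does not spoil the identity --- which is guaranteed by the trailing zero in $(p\ q)\cdot\tilde M$. Observe also that this argument works for an \emph{arbitrary} $M$ satisfying \eqref{MyN}, not only for a unimodular one; in particular one does not need to first pass to the unimodular $M'$ of \eqref{Mp}, which would only inflate the bound.
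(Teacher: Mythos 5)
Your proposal is correct and follows essentially the same route as the paper: apply the explicit Effective Quillen--Suslin bound of Theorem \ref{explicitqs} with $r=2$ and $d=\max\{\delta_0,\delta_M\}$ to the unimodular matrix $\tilde M$, and take $N$ to be the first two columns of the resulting $U$ with the last row deleted, so that $\deg(N)\leq 3n^24^n(\max\{\delta_0,\delta_M\}+1)^{2n}$. Your verification that $(a_1\ldots a_m)\cdot N=(p\ q)$ (using the trailing zero in $(p\ q)\cdot\tilde M$) is a detail the paper leaves implicit, but the argument is the same.
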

\begin{proof}
A possible matrix $N$ can be taken by using the first columns (except the last row) of a unimodular $U\in R^{(m+1)\times (m+1)}$ such that \eqref{QS} holds. Thanks to Theorem \ref{explicitqs}, we have that the degree of $N$ is bounded by $3n^24^n(\max\{\delta_0,\,\delta_M\}+1)^{2n},$ which proves the claim.
\end{proof}
In the zero-dimensional case, one can have a sharper bound for $\delta_N.$
\begin{proposition}\label{44}
If the ideal $I$ is zero-dimensional, given $\delta_0$ and $\delta_a,$ there exists a matrix $N$ such that one can take $\delta_N=2\delta_a^2+\delta_a+\delta_0$.
\end{proposition}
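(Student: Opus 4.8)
The plan is to first notice that the zero-dimensionality hypothesis forces $n=2$: since $\gcd(p,q)=1$ and $\operatorname{grade}(I)=2$, the ideal $I=\langle p,q\rangle$ has height $2$, so $\dim(R/I)=n-2$, which is $0$ only when $n=2$. Thus we work in $R=\K[x_1,x_2]$, and the goal is to produce $b_1,\dots,b_m,c_1,\dots,c_m\in R$ of degree at most $2\delta_a^2+\delta_a+\delta_0$ with $\sum_i b_ia_i=p$ and $\sum_i c_ia_i=q$; the matrix $N$ whose columns are $(b_i)_i$ and $(c_i)_i$ then satisfies \eqref{MyN}.

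First I would place a complete intersection inside $I$. Since $\gcd(a_1,\dots,a_m)$ divides $\gcd(p,q)=1$ and $\K$ is infinite, a ``finite union of subspaces'' argument yields scalars $s_i,t_i\in\K$ for which $f_1:=\sum_i s_ia_i$ and $f_2:=\sum_i t_ia_i$ are coprime; as $R$ is Cohen--Macaulay of dimension $2$, the sequence $f_1,f_2$ is regular, $\langle f_1,f_2\rangle\subseteq I$, and $\deg f_j\le\delta_a$. After a generic linear change of the variables (which preserves all degree bounds and the ideal) I may additionally assume each $f_j$ is, up to a nonzero constant, monic both in $x_1$ and in $x_2$. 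In particular $\dim_\K(R/I)\le\dim_\K(R/\langle f_1,f_2\rangle)\le\delta_a^2$ by B\'ezout.

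Next I would extract univariate members of $I$ carrying explicit cofactors: set $r(x_1):=\operatorname{Res}_{x_2}(f_1,f_2)$ and $s(x_2):=\operatorname{Res}_{x_1}(f_1,f_2)$. Coprimality makes both nonzero, the monicity normalization gives $\deg r,\deg s\le\delta_a^2$, and the B\'ezout identity attached to a resultant writes $r=\alpha_1f_1+\alpha_2f_2$, $s=\beta_1f_1+\beta_2f_2$ with $\alpha_j,\beta_j$ of degree $\le\delta_a^2+\delta_a$; substituting $f_j=\sum_k(\cdot)a_k$ turns this into $r=\sum_k\rho_ka_k$ and $s=\sum_k\sigma_ka_k$ with $\deg\rho_k,\deg\sigma_k\le\delta_a^2+\delta_a$. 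Dividing $p$ successively by $r(x_1)$ and by $s(x_2)$ (legitimate because of the monicity) produces $p=g_1r+g_2s+\bar p$ with $\deg(g_1r),\deg(g_2s)\le\delta_0$, $\deg_{x_1}\bar p<\deg r$ and $\deg_{x_2}\bar p<\deg s$, hence $\deg\bar p\le\deg r+\deg s-2\le 2\delta_a^2-2$. Rewriting $g_1r$ and $g_2s$ through the $a_k$ by means of $\rho_k,\sigma_k$ costs degree at most $(\delta_0-\deg r)+(\delta_a^2+\delta_a)\le\delta_a^2+\delta_a+\delta_0$. For the remainder $\bar p\in I$, of degree $\le 2\delta_a^2-2$ while $I$ is generated in degree $\le\delta_a$ with $\dim_\K(R/I)\le\delta_a^2$, I would apply an effective membership estimate for the zero-dimensional ideal $I$ -- reducing modulo the grid complete intersection $\langle r(x_1),s(x_2)\rangle$, whose quotient has a monomial $\K$-basis in degrees $\le\deg r+\deg s-2$, and with careful bookkeeping -- to get $\bar p=\sum_k h_ka_k$ with $\deg h_k\le 2\delta_a^2+\delta_a$. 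Collecting the three contributions gives $p=\sum_k b_ka_k$ with $\deg b_k\le 2\delta_a^2+\delta_a+\delta_0$, and symmetrically for $q$; assembling the columns into $N$ then finishes the argument.

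The step I expect to be the main obstacle is bringing the remainder $\bar p$ down to the precise bound $2\delta_a^2+\delta_a$: a naive reduction through $\langle r,s\rangle$ would pair a multiple $\mu r$ with $\deg\mu\sim 2\delta_a^2$ against the cofactors $\rho_k$ of $r$ of degree $\sim\delta_a^2$, yielding a spurious $\sim 3\delta_a^2$. Getting the sharp constant forces one either to invoke a sharp effective-membership theorem for unmixed (here zero-dimensional) ideals from the literature, or to run the reduction of $\bar p$ more economically -- for instance first lowering $\deg_{x_2}\bar p$ below $\deg_{x_2}f_1\le\delta_a$ by division by $f_1$, then below $\deg r$ in $x_1$ by division by $r$, and tracking all resultant-cofactor degrees precisely. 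A minor routine point is checking that the monic-up-to-constant normalization of $f_1$ and $f_2$ in both variables can be achieved by a single generic linear change of coordinates.
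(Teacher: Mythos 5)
There is a genuine gap, and it sits exactly where you flag it yourself. The whole content of this proposition is an effective ideal-membership bound for the zero-dimensional ideal $I=\langle a_1,\ldots,a_m\rangle$: one must produce cofactors $b_i,c_i$ with $\sum_i b_ia_i=p$, $\sum_i c_ia_i=q$ and $\deg b_i,\deg c_i\leq 2\delta_a^2+\delta_a+\delta_0$. Your resultant-and-division preprocessing does not reduce this difficulty: after writing $p=g_1r+g_2s+\bar p$, the remainder $\bar p$ is only known to lie in $I$ (it need not lie in $\langle r,s\rangle$ or in $\langle f_1,f_2\rangle$), so the step ``apply an effective membership estimate for the zero-dimensional ideal $I$ \ldots with careful bookkeeping'' is precisely the statement being proved, now invoked for a polynomial whose degree you have allowed to grow to roughly $2\delta_a^2$ instead of $\delta_0$ — if anything a harder instance than the one you started from. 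You then concede that the only concrete completion you see (reducing $\bar p$ through $\langle r,s\rangle$ and pairing the quotients with the resultant cofactors) overshoots to about $3\delta_a^2$, and that hitting the stated constant would require citing ``a sharp effective-membership theorem \ldots from the literature.'' So the proposal proves the easy bookkeeping and leaves the actual assertion unproved.

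For comparison, the paper's proof is exactly that citation and nothing else: it applies Theorem 2.5 of Hashemi (Comput.\ Complexity 18 (2009)), an effective Nullstellensatz/membership bound for zero-dimensional ideals, directly to $p,q\in I$ with generators of degree at most $\delta_a$; that theorem already yields cofactors within the bound $2\delta_a^2+\delta_a+\delta_0$, and the matrix $N$ is assembled from them. In particular none of the resultant construction is needed, and your preliminary observation that zero-dimensionality forces $n=2$ under the standing hypotheses (grade $2$ and $\gcd(p,q)=1$), while correct, is also not required, since the cited result holds for zero-dimensional ideals in any number of variables. If you want a self-contained proof along your lines, you would have to reprove (a two-variable case of) Hashemi's bound with its sharp constant, e.g.\ by a careful normal-form argument modulo a suitable Gr\"obner basis or border basis of $I$, tracking degrees of the reduction certificates; the sketch as written does not do this.
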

\begin{proof}
This follows essentially from Theorem 2.5 in \cite{has09}.
\end{proof}
We conclude this section by giving the proof of the main result announced in the Introduction
\smallskip
\subsection{Proof of Theorem \ref{mtt}}\label{4.2}
The first algorithm essentially consists in computing the matrix $U$ from \eqref{QS}, and extract the submatrix $\widehat{U}$ which ---thanks to Theorem \ref{mt}--- encodes an $R$-basis of $\mbox{Syz}(a_1,\ldots, a_m).$

A degree bound for $\widehat{U}$ is given in Proposition \ref{41} in terms of the degrees of $\delta_M$ and $\delta_0$. By using Proposition \ref{43}, we can replace this bound by $\delta_0^2+\delta_a,$ and then the first part of the claim follows straightforwardly.

For the second part, we will work with the matrix $N$ as in \S \ref{22}. The algorithm proposed there computes $\widehat{N}$ which ---thanks to Theorem \ref{NN}--- encodes also an $R$-basis of $\mbox{Syz}(a_1,\ldots,a_m),$ having the degree bounds given in Proposition \ref{42}. Note that we are not assuming yet that $I$ is zero-dimensional. This would be used to replace $\delta_N$ and $\delta_M$ with the bounds given in Proposition \ref{43} and \ref{44} to get \eqref{2}. $\qed$

\bigskip
\section{Examples}\label{5}
We present here the computation of two examples. The first one is the running example in \cite{CDM20} and has already $M$ being unimodular, while the second does not. To be consistent with the notation of \cite{CDM20}, we label the variables as $s, t$. So in both examples we have that $n=2$.

\subsection{Example 4.1 in \cite{CDM20}}
Here we have $m=4,\, p= t-s+2,\, q=s^2+1$ and
$$
\left\{\begin{array}{ccl}
a_1(s,t)&=& 11-4s+3s^2+4t\\
a_2(s,t)&=&5-4s+2s^2+4t-2st+t^2\\
a_3(s,t)&=&1+3s^2-s^3+s^2t\\
a_4(s,t)&=&7-3s+s^2+3t.
\end{array}
\right.
$$
As it was shown in \cite{CDM20}, one can take for this case
$$M=\left(\begin{array}{cccc}
4& t-s+2& s^2& 3\\
3& 1& 1& 1
\end{array}
\right).
$$ In addition, a simple $N$ is the following:
\begin{equation}\label{Nex1}
N=\left(
\begin{array}{cc}
-\frac15&\frac35\\
0&0\\
0&0\\
\frac35&-\frac45
\end{array}
\right).
\end{equation}
In this case, as the columns $1$ and $4$ of $M$ are already an invertible matrix in $\K[s,t].$ The same happens with the rows $1$ and $4$ of $N$.
By pivoting the $2\times2$ invertible submatrix of $M$,  it is easy to compute a matrix $4\times4$ unimodular matrix $U^*$   such that
$$M \cdot U^* =\begin{pmatrix}
1&0&0&0 \\
0&1&0 &0
\end{pmatrix},$$
we get:
$$
U^*=\left(
\begin{array}{cccc}
 -\frac{1}{5} & -\frac{s}{5}+\frac{t}{5}+\frac{2}{5} & \frac{s^2}{5}+\frac{s}{5}-\frac{t}{5}-\frac{2}{5} & \frac{s}{5}-\frac{t}{5}+\frac{1}{5}  \\
 0 & 1 & -1 & -1  \\
 0 & 0 & 1 & 0 \\
 \frac{3}{5} & \frac{3 s}{5}-\frac{3 t}{5}-\frac{6}{5} & -\frac{3 s^2}{5}-\frac{3 s}{5}+\frac{3 t}{5}+\frac{6}{5} & -\frac{3 s}{5}+\frac{3 t}{5}+\frac{2}{5}
\end{array}
\right),$$
To pass from $U^*$ to $\widehat{U^*}$ we proceed as in \eqref{whu} and get that
$$\widehat{U^*}=
\left(
\begin{array}{ccc}
-\frac{2 s^2}{5}+\frac{s t}{5}+\frac{9 s}{5}-\frac{7 t}{5}-3&  \frac{s^2}{5}+\frac{s}{5}-\frac{t}{5}-\frac{2}{5} & \frac{s}{5}-\frac{t}{5}+\frac{1}{5}  \\
s-t-2& -1 & -1  \\
0 &1 & 0  \\
\frac{6 s^2}{5}-\frac{6 s t}{5}-\frac{12 s}{5}+\frac{3 t^2}{5}+\frac{12 t}{5}+3 &-\frac{3 s^2}{5}-\frac{3 s}{5}+\frac{3 t}{5}+\frac{6}{5} & -\frac{3 s}{5}+\frac{3 t}{5}+\frac{2}{5}
\end{array}
\right).
$$
Thanks to Theorem \ref{smt}, the columns of $\widehat{U^*}$ encode a basis of $\mbox{Syz}(a_1, a_2, a_3, a_4)$. Note that what we obtained with this procedure is quite different than the basis obtained in \cite{CDM20}. For instance, the degree of this basis is $2,$ which is lower than the one obtained in that paper (equal to $5$).

Now we work with $N$. From \eqref{Nex1} it is easy to extend $N$ to a $4\times4$ unimodular matrix, we chose
$$N^*=\left(
\begin{array}{cccc}
 -\frac{1}{5} & \frac{3}{5} & 0 & 0 \\
 0 & 0 & -5 & 0 \\
 0 & 0 & 0 & 1 \\
 \frac{3}{5} & -\frac{4}{5} & 0 & 0 \\
\end{array}
\right).$$
To produce the matrix $N^{**}$ of the algorithm, we have to modify the columns $3$ and $4$ of $N^*$ by using $M$. We obtain
$$N^{**}=
\left(
\begin{array}{cccc}
 -\frac{1}{5} & \frac{3}{5} & s-t+1 & \frac{s^2}{5}-\frac{3}{5} \\
 0 & 0 & -5 & 0 \\
 0 & 0 & 0 & 1 \\
 \frac{3}{5} & -\frac{4}{5} & -3 s+3 t+2 & \frac{4}{5}-\frac{3 s^2}{5} \\
\end{array}
\right).
$$
Finally, to obtain $\widehat{N}$ we must replace the columns $1$ and $2$ of $N^{**}$ with the first column multiplied by $-q$ plus the second column multiply by $p$:
$$\widehat{N}=
\left(
\begin{array}{ccc}
 \frac{s^2}{5}-\frac{3 s}{5}+\frac{3 t}{5}+\frac{7}{5} & s-t+1 & \frac{s^2}{5}-\frac{3}{5} \\
 0 & -5 & 0 \\
 0 & 0 & 1 \\
 -\frac{3 s^2}{5}+\frac{4 s}{5}-\frac{4 t}{5}-\frac{11}{5} & -3 s+3 t+2 & \frac{4}{5}-\frac{3 s^2}{5} \\
\end{array}
\right).
$$
By Theorem \ref{NN}, the columns of $\widehat{N}$ encode another $R$-basis of $\mbox{Syz}(a_1,a_2,a_3,a_4).$ Note that the bases obtained via $\widehat{N}$ and $\widehat{U^*}$ are essentially different. For instance, the first column of $\widehat{N}$ is a relation that only involves $a_1$ and $a_4$, but nothing of this nature can be found from the columns of $\widehat{U^*}.$ This is because the relation $M\cdot N=\begin{pmatrix}1&0\\ 0&1\end{pmatrix}$ does not hold.

\bigskip
\subsection{Another Example}\label{aex}
Set now $m=4$ again, and $p=t+2s+1,\, q=-2t-s$ which are not under the conditions of the Shape Lemma as all the results in \cite{CDM20} are.  Consider the following sequence of polynomials:
$$\left\{
\begin{array}{ccc}
a_1&=& s+3s^2+t+4st-t^2\\
a_2&=&-s^2+t+t^2\\
a_3&=&s+2s^2-2t^2\\
a_4&=&1+s-t.
\end{array}
\right.
$$
In this case, we can take
$$M=\left(\begin{array}{cccc}
s+t& t& s& 1\\
-s+t& s& t& 1
\end{array}\right),
$$
and note that $M$ is not unimodular (setting $s=t=0$ makes the rank of $M$ drops). In contrast, we have that
$$\tilde{M}=\left(\begin{array}{ccccc}
s+t& t& s& 1& s+2t\\
-s+t& s& t& 1&t+2s+1
\end{array}\right)
$$
is unimodular, and by applying Quillen-Suslin to this matrix we obtain $U$  such that
$$\tilde M \cdot U =\begin{pmatrix}
1&0&0&0 &0\\
0&1&0 &0 &0
\end{pmatrix}.$$
In our case, we get
$$U=\left(
\begin{array}{ccccc}
 0 & 0 & 0 & 1 & 0 \\
 2 & -2 & 2 t-2 s & -4 s & 2 s-2 t+1 \\
 1 & -1 & -s+t+1 & -2 s & s-t \\
 1 & 0 & -s & -s-t & -t \\
 -1 & 1 & s-t & 2 s & t-s \\
\end{array}
\right),$$
and hence
$$\widehat{U}=\left(
\begin{array}{ccc}
 0 & 1 & 0 \\
 2 t-2 s & -4 s & 2 s-2 t+1 \\
 -s+t+1 & -2 s & s-t \\
 -s & -s-t & -t
\end{array}
\right)
$$
is a basis of $\mbox{Syz}(a_1, a_2, a_3, a_4).$ Note that the first columns of $U$ (except the last row) encode the matrix $N$, i.e. we can take this matrix as

$$N=
\left(
\begin{array}{cc}
 0 & 0 \\
 2 & -2 \\
 1 & -1 \\
 1 & 0
\end{array}
\right).$$
From here, we can extend it easily to a unimodular
$$N^*=\left(
\begin{array}{cccc}
 0 & 0 & 1 & 0 \\
 2 & -2 & 0 & 1 \\
 1 & -1 & 0 & 0 \\
 1 & 0 & 0 & 0 \\
\end{array}
\right),$$
and then we get
$$N^{**}=
\left(
\begin{array}{cccc}
 0 & 0 & 1 & 0 \\
 2 & -2 & -4 s & 2 s-2 t+1 \\
 1 & -1 & -2 s & s-t \\
 1 & 0 & -s-t & -t \\
\end{array}
\right).$$
To conclude, we need to replace the first two columns by a combination of them, to get
$$\widehat{N}=\left(
\begin{array}{ccc}
 0 & 1 & 0 \\
 -2 s+2 t-2 & -4 s & 2 s-2 t+1 \\
 -s+t-1 & -2 s & s-t \\
 s+2 t & -s-t & -t \\
\end{array}
\right)$$
which encodes another basis of $\mbox{Syz}(a_1, a_2, a_3, a_4).$ This basis is quite similar than the one we found via $U$. Indeed, the second and the third columns of both matrices coincide, while the first column of $\widehat{N}$ equals to the first minus 2 times the last  column of $\tilde{U}.$ This ``coincidence'' is explained by our choice of $N$ from the first two columns of the matrix $U$ above.

\bigskip
\section{Minimal $\mu$-bases of parametric surfaces}\label{final}
From a geometric point of view, the sequence $(a_1,\ldots, a_m)$ of polynomials in $\K[x_1,\ldots, x_n]$ can be regarded as the parametrization of a variety $Y\subset\K^m$ being the image of the map $(a_1,\ldots, a_m):\K^n\to\K^m.$  Understanding the role of $\mbox{Syz}(a_1,\ldots, a_m)$ in the study of geometric properties of $Y$ is an active area of research, and several results and challenges are well identified there, see for instance \cite{cox03}.  In particular, when $n=2,$ this map represents typically a surface in $\K^m$, and bounding the degrees of generators of the syzygy module has been posed as an open problem in \cite{CCL05} for the case $m=4,$ i.e. surfaces in $3$-dimensional space.

A first answer to this problem was posted in \cite{cid19}, where the author exhibits a general bound of order $\delta_a^{33}$ to solve this problem ($n=2,\,m=4$). In \cite{CDM20} we improved this bound to $\delta_a^{12}$ in the case the ideal $I$ has a so-called ``shape basis'', meaning that one can take $p$ and $q$ in \eqref{1} as $x_2-r$ and $s$ respectively, with $r, s\in\K[x_1].$  Indeed, our main result there (\cite[Theorem 1.2]{CDM20}) is that a basis of $\mbox{Syz}(a_1, a_2, a_3, a_4)$ in that case can be found with degree bounded by
\begin{equation}\label{bud}
5\delta_0^4(2\delta_a+1)^4.
\end{equation}

The approach to solve that problem differs significantly than the one presented here. For instance we only used there the ``simplified'' Effective Quillen-Suslin Theorem presented in \cite{FG90}, which is essentially Theorem \ref{eqs} in the case $r=1$. We only worked with the matrix which is called $M$ in \eqref{MyN}, and took advantage of the fact that one of its rows only depends on the variable $x_1$ in the shape-basis case. From there, after applying the Effective Quillen-Suslin algorithm to that univariate row $(r=1)$, a tricky manipulation of the remaining row would allow us to perform again the same simplified Effective Quillen-Suslin algorithm to the second row. The basis would come then after some simplifications and substitutions of these calculations.

In the present paper, we only apply once the Effective Quillen-Suslin algorithm described in \cite{CCDHKS93} with $r=2$ to the modified matrix $\tilde{M}$ instead of $M$. As a result, we obtain the matrix $\widehat{U}$ which encodes the elements of a basis of $\mbox{Syz}(a_1,\ldots, a_m)$ (Theorem \ref{mt}). In addition, we also get another algorithm over the matrix $N$ which gives another basis for this module (Theorem \ref{NN}). This approach was not considered in \cite{CDM20}.

When applied to the case $n=2,\,m=4,$ and using the fact that thanks to B\'ezout's Theorem one can take $\delta_0\leq\delta_a^2,$ the first bound in Theorem \ref{mtt} then amounts to a constant times $\delta_a^{16},$ while the second one is of the order of $\delta_a^8,$ which is better than the results obtained in \cite{CDM20} if one substitutes $\delta_0$ with $\delta_a^2.$

But we can actually improve the bound of $\delta_a^{16}$ from Theorem \ref{mtt} if we inspect carefully the structure of a matrix $M$ converting a shape basis into the input sequence $a_1\ldots a_4,$ as it was done in \cite{CDM20}. Indeed, we have

\begin{proposition}\label{citam}
If $a_1, a_2, a_3, a_4\in\K[s,t]$ have degrees bounded by $\delta_1$  and the ideal generated by them has a shape basis as in \cite{CDM20} of degree $\delta_0,$ a basis of $\mbox{Syz}(a_1, a_2, a_3, a_4)$ can be found with degree bounded by  $192(\delta_0\delta_a+1)^{4}.$
\end{proposition}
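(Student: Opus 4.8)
The plan is to write down, adapted to a shape basis, an explicit matrix $M$ as in \eqref{MyN} whose degree is bounded by $\delta_0\delta_a$, and then feed the associated matrix $\tilde M$ to the bound already proved in Proposition \ref{41}. By definition of a shape basis we may take $p=t-r(s)$ and $q=q(s)$ with $r,q\in\K[s]$ and $\deg r,\deg q\le\delta_0$; moreover $\deg q\ge 1$ and $\delta_a\ge 1$, since otherwise $I$ would be principal or the unit ideal, contradicting that it has grade $2$. In particular $\delta_0\ge 1$, so $\delta_0\le\delta_0\delta_a$.

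First I would build $M$ by division. Since $p$ is monic of degree one in $t$, for each $i\in\{1,2,3,4\}$ Euclidean division of $a_i$ by $p$ with respect to $t$ gives $a_i=p\,d_i+\rho_i$ with $d_i\in\K[s,t]$ and remainder $\rho_i=a_i(s,r(s))\in\K[s]$. The substitution $t\mapsto r(s)$ induces the isomorphism $\K[s,t]/\langle p\rangle\cong\K[s]$, under which $\langle p,q\rangle$ maps onto $\langle q(s)\rangle\subseteq\K[s]$; as $a_i\in\langle p,q\rangle$, this forces $\rho_i\in\langle q\rangle$, so $e_i:=\rho_i/q$ is a genuine element of $\K[s]$. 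Taking $M$ to be the $2\times 4$ matrix with first row $(d_1\ d_2\ d_3\ d_4)$ and second row $(e_1\ e_2\ e_3\ e_4)$ then gives $(p\ q)\cdot M=(a_1\ a_2\ a_3\ a_4)$, a legitimate choice in \eqref{MyN}, and its second row lies entirely in $\K[s]$ — precisely the structural feature exploited in \cite{CDM20}.

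Then comes the degree count, which is the only computation. Writing $a_i=\sum_{k\le\delta_a}c_{i,k}(s)\,t^k$ with $\deg c_{i,k}\le\delta_a-k$, the division formula $d_i=\sum_{k\ge 1}c_{i,k}(s)\sum_{j=0}^{k-1}r(s)^j\,t^{k-1-j}$ yields $\deg d_i\le(\delta_a-1)\delta_0$, while $\rho_i=\sum_{k}c_{i,k}(s)\,r(s)^k$ yields $\deg\rho_i\le\delta_a\delta_0$, hence $\deg e_i\le\delta_a\delta_0-\deg q\le\delta_a\delta_0$. Therefore $\deg M\le\delta_a\delta_0$, so one may take $\delta_M:=\delta_a\delta_0$, and $\max\{\delta_M,\delta_0\}=\delta_a\delta_0$.

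Finally I would apply Proposition \ref{41} with $\delta:=\delta_a\delta_0$ and $n=2$: the submatrix $\widehat U$ produced by the Effective Quillen--Suslin procedure of Theorem \ref{explicitqs} applied to $\tilde M$ satisfies $\deg\widehat U\le 3\cdot 2^2\cdot 4^2(\delta_a\delta_0+1)^4=192(\delta_a\delta_0+1)^4$, and by Theorem \ref{mt} its columns are an $R$-basis of $\mbox{Syz}(a_1,a_2,a_3,a_4)$. The main obstacle is nothing deeper than the degree bookkeeping for $d_i$ and $e_i$; once the shape form of $p$ pins one row of $M$ inside $\K[s]$ and bounds the other by $(\delta_a-1)\delta_0$, the rest is a direct substitution into the machinery already assembled in Sections \ref{2} and \ref{4}.
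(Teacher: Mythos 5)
Your proposal is correct and follows essentially the same route as the paper: establish $\delta_M\le\delta_a\delta_0$ for a matrix $M$ adapted to the shape basis, then apply the Effective Quillen--Suslin bound (Theorem \ref{explicitqs} via Proposition \ref{41} and Theorem \ref{mt}) to $\tilde M$ with $n=r=2$, giving $3\cdot 2^2\cdot 4^2(\delta_a\delta_0+1)^4=192(\delta_a\delta_0+1)^4$. The only difference is that where the paper simply cites the proof of Theorem 1.2 in \cite{CDM20} for the bound $\delta_M\le\delta_a\delta_0$, you rederive it explicitly by Euclidean division by the monic (in $t$) polynomial $p=t-r(s)$ and reduction modulo $p$ to get divisibility of the remainders by $q$, which is a correct, self-contained version of the same step.
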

\begin{proof}
From the proof of Theorem 1.2 in \cite{CDM20}, we get that the matrix $M$ converting the shape basis into the original sequence has $\delta_M\leq \delta_a\delta_0.$
The result now follows by applying Theorem \ref{explicitqs} to $\tilde{M}$ with $n=r=2.$
\end{proof}

\begin{remark}
Note that the bound obtained in Proposition \ref{citam} is of the same order than the one in \eqref{bud} in terms of $\delta_0$ and $\delta_a.$
\end{remark}

\bigskip




\end{document}